\documentclass[10pt,journal,letterpaper]{IEEEtran}

\usepackage{subfigure}
\usepackage{amssymb, amsmath, amsfonts}
\usepackage{empheq}
\usepackage{stfloats}
\usepackage{caption, cite}
\usepackage{graphicx}

\usepackage{algorithm}
\usepackage{algorithmic}

\usepackage{amsthm}
\usepackage{tikz}
\usepackage{tikzscale}
\usepackage{amsmath}
\allowdisplaybreaks
\usetikzlibrary{shadows,arrows,positioning}
\pgfdeclarelayer{background}
\pgfdeclarelayer{foreground}
\pgfsetlayers{background,main,foreground}

\usetikzlibrary{external}
\usepackage{pgfplots}
\usepackage{cases}

\newtheorem{theorem}{Theorem}

\newtheorem{lemma}{Lemma}

\newtheorem{corollary}{Corollary}
\newtheorem{remark}{Remark}
\newtheorem{assumption}{Assumption}

\newlength\figureheight
\newlength\figurewidth


\DeclareFontFamily{OT1}{pzc}{}
\DeclareFontShape{OT1}{pzc}{m}{it}{<-> s * [1.000] pzcmi7t}{}
\DeclareMathAlphabet{\mathpzc}{OT1}{pzc}{m}{it}

\newcommand{\R}{{\mathbb{R}}}
\newcommand{\E}{{\mathbb{E}}}

\newcommand{\vv}{{\mathbf{v}}}

\newcommand{\bv}{{\mathbf{\bar{v}}}}
\newcommand{\x}{{\mathbf{x}}}
\newcommand{\g}{{\mathbf{g}}}
\newcommand{\bg}{{\mathbf{\bar{g}}}}
\newcommand{\bx}{{\mathbf{\bar{x}}}}
\newcommand{\hx}{{\mathbf{\hat{x}}}}

\newcommand{\lf}{{\nabla{f}}}

\newcommand{\lb}{{\bar{\lambda}}}

\newcommand{\bl}{{\mathbf{L}}}
\newcommand{\bi}{{\mathbf{I}}}
\newcommand{\bk}{{\mathbf{K}}}
\newcommand{\bp}{{\mathbf{P}}}
\newcommand{\bh}{{\mathbf{H}}}


\newcommand\addtag{\refstepcounter{equation}\tag{\theequation}}

\makeatletter

\newcommand{\Rmnum}[1]{\expandafter\@slowromancap\romannumeral #1@}
\makeatother

\title{ \hspace*{\fill} \\\hspace*{\fill} \\ \LARGE \bf{A Communication-Efficient Stochastic Gradient Descent Algorithm for Distributed Nonconvex Optimization}}

\author{Antai Xie, Xinlei Yi, Xiaofan Wang, Ming Cao, and Xiaoqiang Ren
\thanks{A. Xie, X. Wang, and X. Ren are with the School of Mechatronic Engineering and Automation, Shanghai University, Shanghai, China. Emails: \{xatai,\,xfwang,\,xqren\}@shu.edu.cn.}
\thanks{X. Yi is with the Lab for Information \& Decision Systems, Massachusetts Institute of Technology, Cambridge, MA 02139, USA. Email: xinleiyi@mit.edu.}
\thanks{M. Cao is with the Faculty of Science and Engineering, University of Groningen, Groningen, the Netherlands. Email: m.cao@rug.nl.}
}
\begin{document}
	\maketitle
	 \begin{abstract}
 This paper studies distributed nonconvex optimization problems with stochastic gradients for a multi-agent system, in which  each agent aims to minimize the sum of all agents’ cost functions by using local compressed information exchange. We propose a distributed stochastic gradient descent (SGD) algorithm, suitable for a general class of compressors. We show that the proposed algorithm achieves the linear speedup convergence rate $\mathcal{O}(1/\sqrt{nT})$ for smooth nonconvex functions, where $T$ and $n$ are the number of iterations and agents, respectively. If the global cost function additionally satisfies the Polyak--Łojasiewicz condition, the proposed algorithm can linearly converge to a neighborhood of the global optimum, regardless of whether the stochastic gradient is unbiased or not. Numerical experiments are carried out to verify the efficiency of our algorithm.
	\end{abstract}
	 \begin{IEEEkeywords}
		Distributed nonconvex optimization, linear speedup, compressed communication, stochastic gradient.
	\end{IEEEkeywords}
	
\section{Introduction}

 In recent years, distributed optimization in multi-agent system has become a popular research topic due to its widespread applications in resource allocation \cite{xu2017distributed}, control \cite{nedic2018distributed}, learning \cite{li2020distributed}, and estimation \cite{cattivelli2009diffusion}. The origin of this problem can be traced back to~\cite{tsitsiklis1986distributed,bertsekas1989parallel}. Many effective algorithms have been proposed to address this problem, e.g., distributed (sub)gradient descent~\cite{nedic2009distributed,xu2017convergence,yuan2016convergence}, gradient tracking methods~\cite{qu2017harnessing}, EXTRA~\cite{shi2015extra}, and distributed Newton methods~\cite{varagnolo2015newton,wei2013distributed}. However, these algorithms only considered convex cost functions. In many real-world problems, the cost function is nonconvex, such as empirical risk minimization~\cite{bottou2018optimization} and resource allocation~\cite{tychogiorgos2013non}. To this end, Matei and Baras~\cite{matei2013non} proposed a distributed algorithm for nonconvex constrained optimization utilizing first-order numerical methods. However, this algorithm only converges to a local minimum when the initial values of agents are sufficiently close to that minimum. Tatarrenko and Touri~\cite{tatarenko2017non} studied distributed nonconvex optimization problems on time-varying graphs, proving that the algorithm converges to a local minimum when the cost function has no saddle points. Sun and Hong~\cite{sun2019distributed} introduced a novel nonconvex distributed optimization algorithm using polynomial filtering techniques. Daneshmand \textit{et al.}~\cite{daneshmand2018second} proposed that under the Kurdyka–Łojasiewicz condition, second-order stationary points can be found. Zeng and Yin~\cite{zeng2018nonconvex} presented a gradient descent-based algorithm that converges to first-order stationary points (or their neighborhoods) under diminishing step-sizes (or constant step-sizes).

 In distributed optimization problems, each agent needs to exchange information with its neighbors in order to obtain the global information. However, network bandwidth is typically limited in practical problems. Therefore, it is necessary to consider communication-efficient algorithms. A common solution for agents is to transmit compressed information instead of the raw information. Kajiyama \textit{et al.}~\cite{kajiyama2020linear} achieved linear convergence by combining the gradient tracking algorithm with a compressor having bounded absolute compression errors, and Xiong~\textit{et al.}~\cite{xiong2022quantized} extended the approach in~\cite{kajiyama2020linear} to directed graphs. Liao \textit{et al.}~\cite{liao2022compressed} achieved the same convergence rate with a more general class of compressors. Additionally, the compressed communication algorithms proposed in~\cite{reisizadeh2019robust,taheri2020quantized,yi2022communication} are applicable to nonconvex cost functions.

Most of the aforementioned approaches require the gradient information. However, explicit expressions for gradients are often inaccessible or difficult to obtain. Therefore, it is necessary to consider stochastic gradients. Alistarh \textit{et al.}~\cite{alistarh2017qsgd} and Koloskova \textit{et al.}~\cite{koloskova2019decentralized} proposed communication-efficient stochastic gradient descent (SGD) algorithms by using an unbiased compressor and biased but contractive compressors, respectively. Singh~\textit{et al.}~\cite{singh2022sparq} additionally considered an event-triggered mechanism to further reduce communication costs. Furthermore, \cite{alistarh2017qsgd,koloskova2019decentralized,singh2022sparq,vogels2020practical,koloskova2019decentralized1} achieved an $\mathcal{O}(1/\sqrt{nT})$ convergence rate, where the omitted parameters are not affected by the number of agents $n$. Therefore, they achieved linear speedup\footnote{Linear speedup is achieved if an algorithm uses $n$ times less iterations
than its centralized counterpart to attain the same accuracy.}. However, the authors of~\cite{alistarh2017qsgd,koloskova2019decentralized,singh2022sparq,vogels2020practical,koloskova2019decentralized1} provided analysis only for strongly convex and smooth non-convex cost functions, but did not provide analysis for the Polyak–Łojasiewicz (P--Ł) condition. The P--Ł is weaker than the strong convexity and does not imply the convexity~\cite{yi2021linear}. 
	 
In this paper, we propose a Compressed Primal--dual SGD algorithms (CP-SGD) to solve the distributed nonconvex optimization problem with limited bandwidths. The main contributions of this work are summarized as follows:
 \begin{enumerate}
	\item The proposed algorithm CP-SGD is suitable for a general class of compressors with bounded relative compression errors, which covers the class of compressors used in~\cite{alistarh2017qsgd,koloskova2019decentralized,singh2022sparq,vogels2020practical,koloskova2019decentralized1}. We show that CP-SGD achieves the linear speedup convergence rate $\mathcal{O}(1/\sqrt{nT})$ when the cost functions are smooth (Theorem~\ref{theo:convergence1}). We would like to highlight that, comparing with~\cite{alistarh2017qsgd,koloskova2019decentralized,singh2022sparq,vogels2020practical,koloskova2019decentralized1}, we achieve such linear speedup convergence under weaker assumptions on the (stochastic) gradients.
        \item When the global cost function additionally satisfies the P--Ł condition, we show that CP-SGD linearly converges to a neighborhood of the global minimum with unbiased stochastic gradients~(Theorem~\ref{theo:convergence2}). 
        \item We then consider the biased stochastic gradients. We show that CP-SGD linearly converges to a neighborhood of the global
optimum if the global cost function additionally satisfies the P--Ł condition even for the biased stochastic gradients, but the size of the neighborhood is different from that under unbiased gradients (Theorem~\ref{theo:convergence4}).
 \end{enumerate}

The remainder of this paper is organized as follows. In Section~\ref{sec:Problemsetup}, we introduce the necessary notations and formulate the considered problem. The CP-SGD algorithm is proposed in Section~\ref{sec:Algorithm1}, and its convergence rate without and with P--Ł condition are then analyzed. Some numerical examples are provided in Section~\ref{simulation} to verify the theoretical results. The conclusion and proofs are provided in Section~\ref{conclusion} and Appendix~\ref{app-convergence1}--\ref{app-convergence4}, respectively.
	
\emph{Notations}: $\R$ ($\mathbb{R}_{+}$) is the set of (positive) real numbers. $\mathbb{N}$ the set of positive nature numbers. $\mathbb{R}^n$ is the set of $n$ dimensional vectors with real values. The transpose of a matrix $P$ is denoted by $P^\top$, and we use $P_{ij}$ to denote the element in its $i$-th row and $j$-th column. The Kronecker production is denoted by~$\otimes$. The $n$-dimensional all-one and all-zero column vectors are denoted by $\mathbf{1}_n$ and $\mathbf{0}_n$, respectively. The $n$-dimensional identity matrix is denoted by $I_n$. $diag(x)$ is a diagonal matrix with the vector $x$ on its diagonal. We then introduce two stacked vectors: for a vector $\x\in\R^{nd}$, we denote $\bar{x}=\frac{1}{n}(\mathbf{1}_n^\top\otimes I_d)\x$ and $\mathbf{\bar{\x}}\triangleq\mathbf{1}_n\otimes\bar{x}$. $\vert\cdot\vert$ and $\Vert\cdot\Vert$ denote the absolute value and $l_2$ norm, respectively. For a matrix $W$, we use $\bar{\lambda}_W$ and $\underline{\lambda}_W$ to denote its spectral radius and
minimum positive eigenvalue if the matrix $W$ has positive eigenvalues, respectively. Furthermore, for any square matrix $A$ and vector x with suitable dimension, we denote $\Vert x\Vert_A^2=x^\top Ax$.

\section{Preliminaries and Problem Formulation} \label{sec:Problemsetup}

\subsection{Distributed Optimization}

We consider a network of $n$ agents, where each agent has a private (possibly nonconvex) cost function $f_i:\mathbb{R}^d\mapsto\mathbb{R}$. All agents aim to solve the following optimization problem cooperatively:
\begin{align}\label{P1}
	\min_{x\in\mathbb{R}^d}f(x)=\frac{1}{n}\sum_{i=1}^n f_i(x),
\end{align}
where $x$ is the global decision variable. More specifically, we assume each agent $i$ maintains a local estimate $x_{i,k}\in\mathbb{R}^d$ of $x$ at time step $k$ and use $\lf_i(x_{i,k})$ to denote the gradient of $f_i$ with respect to $x_{i,k}$. Furthermore, we assume that each agent in the network only has access to the stochastic gradient of its local cost function. We use $\tilde{\nabla}f_{i,k}=g_i(x_{i,k},\xi_{i,k})$ to denote the stochastic gradient of $f_i$ at $x_{i,k}$ with $\xi_{i,k}$ being a random variable.
\subsection{Graph Theory}
	
In this paper, we use the undirected graph $\mathcal{G}(\mathcal{V}, \mathcal{E})$ to denote the communication network with $n$ agents, where $\mathcal{V}=\{1, 2, \ldots, n\}$ is the set of the agents' indices and $\mathcal{E} \subseteq \mathcal{V} \times \mathcal{V}$ is the set of edges. The edge $(i,j)\in\mathcal{E}$ if and only if agents~$i$ and~$j$ can communicate with each other. The coupling weight matrix of $\mathcal{G}$ is denoted by $W=[w_{ij}]_{n\times n}\in\mathbb{R}^{n\times n}$ with $w_{ij}>0$ if $(i,j)\in\mathcal{E}$, and $w_{ij}=0$, otherwise. Furthermore, the neighbor agent set of agent $i$ is denoted by $\mathcal{N}_i=\{j\in\mathcal{V}|~(i,j)\in\mathcal{E}\}$. The degree matrix is denoted as $D=diag[d_1,d_2,\cdots,d_n]$, where $d_i=\sum_{j}^n w_{ij},~\forall i\in\mathcal{V}$. The Laplacian matrix of graph $\mathcal{G}$ is denoted by $L=D-W$.

\subsection{Assumptions}

In this subsection, we introduce the following assumptions on graph, local cost functions $f_i$ and stochastic gradient $g_i(x_{i,k},\xi_{i,k})$.
\begin{assumption}\label{as:strongconnected}
	The undirected graph $\mathcal{G}(\mathcal{V}, \mathcal{E})$ is connected.
\end{assumption}

\begin{assumption}\label{as:smooth}
	Each local cost function $f_i$ is $L_f$-smooth, for some $L_f>0$, namely for any $x,y\in\mathbb{R}^d$,
	\begin{align}\label{eqn:smooth}
	\left\Vert \lf_i(x)-\lf_i(y)\right\Vert\leq L_f\left\Vert x-y\right\Vert.
	\end{align}
\end{assumption}
From~\eqref{eqn:smooth}, we have
\begin{align}\label{eqn:smooth1}
\vert f_i(y)-f_i(x)-(y-x)^\top\lf_i(x)\vert\leq\frac{L_f}{2}\left\Vert y-x\right\Vert^2.
\end{align}

Assumption~\ref{as:strongconnected} and~\ref{as:smooth} are standard for distributed optimization problems and widely used in existing works, e.g.,~\cite{shi2015extra,yang2019survey,yi2022communication}.  
\begin{assumption}\label{as:independent}
    The random variables $\{\xi_{i,k},i\in\mathcal{V},k\in\mathbb{N}\}$ are independent of each other.
\end{assumption}
\begin{assumption}\label{as:unbiase}
    The stochastic gradient $g_i(x,\xi_{i,k})$ is unbiased, that is,
    \begin{align}
        \E_{\xi_{i,k}}[g_i(x,\xi_{i,k})]=\lf_i(x),~\forall i\in\mathcal{V},~k\in\mathbb{N}, x\in\R^,
    \end{align}
    where $\mathbb{E}_{\xi_{i,k}}$ denotes the expectation with respect to $\xi_{i,k}$.
\end{assumption}
\begin{assumption}\label{as:boundedvar}
 There exists a constant $\sigma>0$ such that 
    \begin{align}
    \E_{\xi_{i,k}}\Vert g_i(x,\xi_{i,k})-\lf_i(x)\Vert^2\leq\sigma^2,~\forall i\in\mathcal{V},~k\in\mathbb{N}, x\in\R^d.
    \end{align}
\end{assumption}
\begin{remark}
    Assumptions~\ref{as:independent}--\ref{as:boundedvar} are commonly used for stochastic gradients, e.g.,~\cite{alistarh2017qsgd,singh2022sparq,huang2023cedas}. Furthermore, Assumption~\ref{as:boundedvar}~only requires that the random gradient has a bounded variance, which is weaker than the bounded second moment or the bounded gradient used in~\cite{koloskova2019decentralized1,stich2018local}.
\end{remark}

We then make the following assumptions on the global cost function~$f$.

\begin{assumption}\label{as:finite}
Let $f^*$ be the minimum function value of the problem~\eqref{P1}. We assume $f^*>-\infty$.
\end{assumption}
\begin{assumption}\label{as:PLcondition}
(Polyak–Łojasiewicz (P--Ł) condition~\cite{yi2022communication}) There exists a constant $\nu>0$ such that for any $x\in\mathbb{R}^d$,
\begin{align}
\frac{1}{2}\left\Vert \lf(x)\right\Vert^2\geq \nu(f(x)-f^*).
\end{align}
\end{assumption}
\begin{remark}
    Note that the P--Ł condition does not imply the convexity of the global cost function $f$, and is weaker than strong convexity~\cite{yi2021linear}. Furthermore, it is easy to check that all stationary points of~\eqref{P1} under P--Ł condition are the global minimizer.
\end{remark}

\subsection{Compression Method}
To save communication resources, we assume that agents in the network only exchange compressed information. More specifically, for any $x\in\R^d$, we consider a general class of stochastic compressors $C(x)$ that satisfy the following assumption.

\begin{assumption}\label{as:compressor}
For some constants $\varphi\in(0,1]$ and $r>0$ the compressor $C(\cdot):\mathbb{R}^d\mapsto\mathbb{R}^d$ satisfies 
\begin{align*}
    \mathbb{E}_C\left[
        \left\Vert \frac{C(x)}{r}-x\right\Vert^2\right]\leq(1-\varphi)\left\Vert x\right\Vert^2, \forall x\in\mathbb{R}^d,\addtag\label{eq:propertyofcompressors}
\end{align*}
where $\mathbb{E}_C$ denotes the expectation with respect to the stochastic compression operator $C$. 
\end{assumption}
From~\eqref{eq:propertyofcompressors} and the Cauchy--Schwarz inequality, one obtains that
\begin{align}\label{eq:propertyofcompressors1}
    \mathbb{E}_C\left[
    \left\Vert C(x)-x\right\Vert^2\right]\leq r_0\left\Vert x\right\Vert^2, \forall x\in\mathbb{R}^d.
\end{align}
where $r_0=2r^2(1-\varphi)+2(1-r)^2$.

 As pointed out in~\cite{liao2022compressed} that compressors under Assumption~\ref{as:compressor} cover the class of compressors used in\cite{alistarh2017qsgd,koloskova2019decentralized,singh2022sparq,vogels2020practical}. Furthermore, it is easy to verify that the following commonly used compressors satisfy Assumption~\ref{as:compressor}. 
\begin{itemize}
	\item Greedy (Top-$k$) quantizer~\cite{beznosikov2023biased}:
	\begin{align*}
		C_1(x):=\sum_{i_s=1}^kx_{(i_s)}e_{i_s},\addtag\label{eq:compressor1}
	\end{align*}
where $x_{(i_s)}$ is the $i_s$-th coordinate of $x$ with $i_1,\dots,i_k$ being the indices of the largest $k$ coordinates in magnitude of $x$, and $e_1,\dots,e_d$ are the standard unit basis vectors in $\R^d$.
    \item Biased $b$-bits quantizer~\cite{koloskova2019decentralized}:
    \begin{align*}
    	C_2(x):=\frac{\Vert x\Vert}{\xi}\cdot \text{sign}(x)\cdot 2^{-(b-1)}\circ \left\lfloor\frac{2^{(b-1)}\vert x\vert}{\Vert x\Vert} +u\right\rfloor,\addtag\label{eq:compressor2}
    \end{align*}
where $\xi=1+\min\{\frac{d}{2^{2(b-1)}},\frac{\sqrt{d}}{2^{(b-1)}}\}$, $u$ is a random dithering vector uniformly sampled from $[0,1]^d$, $\circ$ is the Hadamard product, and $\text{sign}(\cdot)$, $|\cdot|$, $\lfloor\cdot\rfloor$ are the element-wise sign, absolute and floor functions, respectively. 
\end{itemize}

\section{Compressed Primal--Dual SGD Algorithm }\label{sec:Algorithm1}
In this section, we propose a Compressed Primal--dual SGD algorithm (CP-SGD) to solve the problem~\eqref{P1} under the limited bandwidths. Furthermore, we analyze the convergence rate of CP-SGD without and with the P--Ł condition.
\subsection{Algorithm Description}
To solve the distributed nonconvex optimization problem~\eqref{P1}, Yi~\textit{et~al.}~\cite{yi2022primal} proposed the following distributed primal--dual SGD algorithm
\begin{align*}
&~x_{i,k+1}=x_{i,k}-\eta_k(\gamma_k\sum_{j=1}^n L_{ij}x_{j,k}+\omega_k  v_{i,k}+\tilde{\nabla} f_{i,k}),\addtag\label{iterationxp}\\
&~v_{i,k+1}=v_{i,k}+\eta_k\omega_k  \sum_{j=1}^n L_{ij}x_{j,k},\addtag\label{iterationv}
\end{align*}
where $\eta_k$ is step-size, $\gamma_k$ as well as$~\omega_k $ are time-varying positive parameters, and $v_{i,k}$ is the auxiliary variable of agent~$i$. 

To accommodate limited bandwidth, each agent~$i$ needs to transmit the compressed information $C(x_{j,k}-x_{j,k}^c)$ to its neighbors. We then use an estimated variable $\hat{x}_{i,k}$ to reduce the compressed error. Specifically, the updates for agent $i\in\mathcal{V}$ can be described as follows:
\begin{align*}
&~x_{i,k+1}=x_{i,k}-\eta_k(\gamma_k \sum_{j=1}^n L_{ij}\hat{x}_{j,k}+\omega_k  v_{i,k}+\tilde{\nabla} f_{i,k}),\addtag\label{eq:iterationxp2}\\
&~v_{i,k+1}=v_{i,k}+\eta_k\omega_k \sum_{j=1}^n L_{ij}\hat{x}_{j,k},\addtag\label{eq:iterationv2}
\end{align*}
where 
\begin{align*}
	&\hat{x}_{j,k}=x_{j,k}^c+C(x_{j,k}-x_{j,k}^c),\addtag\label{citerationx}\\
	&x_{j,k+1}^c=(1-\alpha_x)x_{j,k}^c+\alpha_x\hat{x}_{j,k},\addtag\label{citerationxc}
\end{align*}
with $\alpha_x$ being a positive parameter and initial compressed information $x_{i,0}^c=\mathbf{0}_d$,~$\forall i\in\mathcal{V}$. We then describe the CP-SGD in Algorithm~\ref{Al:CP-SGD}.

\begin{algorithm}[]
	\caption{CP-SGD Algorithm}
	\label{Al:CP-SGD}
	\begin{algorithmic}[1]
		\STATE \textbf{Input:} Stopping time $T$, adjacency matrix $W$, and positive parameters $\{\eta_k\}$, $\{\gamma_k\}$, $\{\omega_k\}$, $\alpha_x$.
		\STATE \textbf{Initialization:} Each ~$i\in\mathcal{V}$ chooses arbitrarily $x_i(0)\in\mathbb{R}^d$, $x^c_i(0)=\bf{0}_d$, $v_i(0)=\bf{0}_d$.
		\FOR{$k=0,1,\dots,T-1$}
		\FOR {for $i\in\mathcal{V}$ in parallel} 
		\STATE Compute $C(x_{i,k}-x_{i,k}^c)$ and broadcast it to its neighbors $\mathcal{N}_i$.
		\STATE Receive $C(x_{j,k}-x_{j,k}^c)$ from $j\in\mathcal{N}_i$.	
		\STATE Update $x_{i,k+1}$ and $v_{i,k+1}$ according to~\eqref{eq:iterationxp2} and~\eqref{eq:iterationv2}, respectively.
		\STATE Update $x_{j,k+1}^c$ from~\eqref{citerationxc}.
		\ENDFOR
		\ENDFOR
		\STATE \textbf{Output:} \{$x_{i,k}$\}.
	\end{algorithmic}
\end{algorithm}

\subsection{Convergence Analysis of CP-SGD}
In this section, we first show the convergence of CP-SGD  for smooth nonconvex cost functions.

\begin{theorem}\label{theo:convergence1}
	Suppose Assumptions~\ref{as:strongconnected}--\ref{as:boundedvar} and \ref{as:compressor} hold and in Algorithm~\ref{Al:CP-SGD}, let~$\gamma_k=\beta_1\omega_k,
 ~\eta_k=\frac{\beta_2}{\omega_k},~\omega_k=\omega>\beta_3$, and $\alpha_x\in(0,\frac{1}{r})$, $\forall k\in\mathbb{N}$ where $~\beta_1>c_0,~\beta_2>0$ with $c_0,\beta_3$ are positive constants given in Appendix~\ref{app-convergence1}. Then, for any $T\in\mathbb{N}$, we have
 \begin{align*}
    &\frac{1}{T}\sum_{k=0}^{T-1}\mathbb{E}\left[\frac{1}{n}\sum_{i=1}^n\Vert x_{i,k}-\bar{x}_k\Vert^2\right]\leq \mathcal{O}(\frac{1}{T})+\mathcal{O}(\frac{1}{\omega^2}),\addtag\label{eq:theo11}\\
    &\frac{1}{T}\sum_{k=0}^{T-1}\mathbb{E}\Vert \nabla f(\bar{x}_k)\Vert^2\leq\mathcal{O}(\frac{\omega}{T})+\mathcal{O}(\frac{1}{n\omega})+\mathcal{O}(\frac{1}{T})+\mathcal{O}(\frac{1}{\omega^2}).\addtag\label{eq:theo12}
 \end{align*}
\end{theorem}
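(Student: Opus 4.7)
The plan is to analyze a carefully designed Lyapunov function that couples the primal suboptimality $f(\bar x_k)-f^*$, the consensus error $\|\mathbf{x}_k-\bar{\mathbf{x}}_k\|^2$, the compression error $\|\mathbf{x}_k-\mathbf{x}^c_k\|^2$, and the dual magnitude $\|\mathbf{v}_k\|^2$. Weights on these four terms must be tuned so that all cross-couplings arising from the Kronecker-Laplacian terms and the compressor inaccuracy are dominated by the intrinsic contraction terms, leaving a clean single-step descent inequality that telescopes cleanly.

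First, I would stack the per-agent updates into
$\mathbf{x}_{k+1}=\mathbf{x}_k-\eta_k(\gamma_k\mathbf{L}\hat{\mathbf{x}}_k+\omega_k\mathbf{v}_k+\tilde{\nabla}\mathbf{f}_k)$
and $\mathbf{v}_{k+1}=\mathbf{v}_k+\eta_k\omega_k\mathbf{L}\hat{\mathbf{x}}_k$ with $\mathbf{L}=L\otimes I_d$. Averaging and using $\mathbf{1}_n^\top L=\mathbf{0}$ together with $\mathbf{1}_n^\top\mathbf{v}_0=\mathbf{0}$ gives $\bar x_{k+1}=\bar x_k-(\eta_k/n)\sum_i\tilde{\nabla}f_{i,k}$ and $\mathbf{1}_n^\top\mathbf{v}_k=\mathbf{0}$ for every $k$. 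Applying the smoothness inequality from Assumption~\ref{as:smooth}, taking the conditional expectation over $\xi_{i,k}$, and invoking Assumptions~\ref{as:independent}--\ref{as:boundedvar}, I obtain
\begin{align*}
\mathbb{E}[f(\bar x_{k+1})]
&\le \mathbb{E}[f(\bar x_k)]-\frac{\eta_k}{2}\mathbb{E}\|\nabla f(\bar x_k)\|^2\\
&\quad +\frac{L_f^2\eta_k}{2n}\mathbb{E}\|\mathbf{x}_k-\bar{\mathbf{x}}_k\|^2+\frac{L_f\eta_k^2}{2n}\sigma^2,
\end{align*}
which couples the primal descent to the consensus error and leaves an $\mathcal{O}(\eta_k^2/n)$ residual that is responsible for the linear-speedup factor.

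Second, I would derive recursions for the three auxiliary quantities. Projecting onto the consensus-orthogonal subspace $\Pi=I-\frac{1}{n}\mathbf{1}_n\mathbf{1}_n^\top\otimes I_d$ and using $\Pi\mathbf{L}=\mathbf{L}\Pi$ together with the spectral bounds $\underline{\lambda}_{L}\le\|\mathbf{L}\Pi\|\le\bar\lambda_L$, the consensus error obeys
$\mathbb{E}\|\mathbf{x}_{k+1}-\bar{\mathbf{x}}_{k+1}\|^2\le (1-c_1\eta_k\gamma_k\underline\lambda_{L})\mathbb{E}\|\mathbf{x}_k-\bar{\mathbf{x}}_k\|^2+c_2\eta_k\gamma_k\bar\lambda_L\mathbb{E}\|\hat{\mathbf{x}}_k-\mathbf{x}_k\|^2+c_3\eta_k^2\omega_k^2\mathbb{E}\|\mathbf{v}_k\|^2+c_4\eta_k^2n\sigma^2$ after Young's inequality. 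Assumption~\ref{as:compressor} together with $x^c_{j,k+1}=(1-\alpha_x)x^c_{j,k}+\alpha_x\hat x_{j,k}$ gives the standard contraction $\mathbb{E}_C\|\mathbf{x}_{k+1}-\mathbf{x}^c_{k+1}\|^2\le(1-\alpha_xr\varphi/2)\|\mathbf{x}_k-\mathbf{x}^c_k\|^2+c_5\mathbb{E}\|\mathbf{x}_{k+1}-\mathbf{x}_k\|^2$, with the sign determined by $\alpha_x\in(0,1/r)$ as in the hypothesis. The dual update yields a recursion on $\mathbb{E}\|\mathbf{v}_k\|^2$ whose driving terms are again $\|\mathbf{x}_k-\bar{\mathbf{x}}_k\|^2$ and $\|\hat{\mathbf{x}}_k-\mathbf{x}_k\|^2$.

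Third, I would form
$V_k=\mathbb{E}[f(\bar x_k)-f^*]+a_1\mathbb{E}\|\mathbf{x}_k-\bar{\mathbf{x}}_k\|^2+a_2\mathbb{E}\|\mathbf{v}_k\|^2+a_3\mathbb{E}\|\mathbf{x}_k-\mathbf{x}^c_k\|^2$,
substitute the four recursions, and pick $a_1,a_2,a_3$ together with the threshold $\beta_3$ on $\omega$ and the threshold $c_0$ on $\beta_1$ so that every positive cross-term is absorbed. With the prescribed choice $\gamma_k=\beta_1\omega$, $\eta_k=\beta_2/\omega$ the descent collapses to
\begin{align*}
V_{k+1}\le V_k-\tfrac{\beta_2}{2\omega}\mathbb{E}\|\nabla f(\bar x_k)\|^2-c_6\beta_2\beta_1\underline\lambda_L\mathbb{E}\|\mathbf{x}_k-\bar{\mathbf{x}}_k\|^2+\tfrac{C_1}{n\omega^2}+\tfrac{C_2}{\omega^2}.
\end{align*}
Summing from $k=0$ to $T-1$, telescoping $V_k$ (which is lower-bounded by $f^*-f(x_0)\ge-\infty$ ruled out by Assumption~\ref{as:finite}), and dividing by $c_6\beta_2\beta_1\underline\lambda_L T$ yields \eqref{eq:theo11}; dividing the same inequality by $(\beta_2/2\omega)T$ instead and substituting \eqref{eq:theo11} into the residual consensus term yields \eqref{eq:theo12}.

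The main obstacle will be the constant bookkeeping needed to identify $c_0$ and $\beta_3$ simultaneously. The compression channel enters the consensus recursion through $\eta_k\gamma_k\bar\lambda_L\|\mathbf{x}_k-\mathbf{x}^c_k\|^2$, so the weight $a_3$ must scale like $\gamma_k^2=\beta_1^2\omega^2$; the dual recursion then forces $a_2$ to scale like $\omega^{-2}$; and the increment $\mathbb{E}\|\mathbf{x}_{k+1}-\mathbf{x}_k\|^2$ appearing on the right-hand side of the compression contraction contains an $\omega^2$ factor that must be dominated by the $\alpha_xr\varphi/2$ contraction. These constraints couple through $\beta_1$, $\beta_2$, $\alpha_x$, $\varphi$, $r$, $L_f$, $\underline\lambda_L$ and $\bar\lambda_L$, and tracking them consistently is the only delicate part; once the coefficients are fixed, the telescoping and rate extraction are routine.
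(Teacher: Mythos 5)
Your overall architecture --- stack the updates, write a smoothness descent for $f(\bar x_k)$, derive recursions for the consensus, compression, and dual errors, combine them into a weighted Lyapunov function, and telescope --- matches the paper's. However, the specific Lyapunov function you propose cannot produce the single-step descent inequality you assert in your third step, for two related reasons. First, you track $a_2\mathbb{E}\|\mathbf{v}_k\|^2$, but the dual variable does not converge to zero: at any consensual fixed point of \eqref{eq:iterationxp2}--\eqref{eq:iterationv2} one needs $\omega v_i^* = -\nabla f_i(x^*)$, so $\|\mathbf{v}_k\|^2$ tends to $\frac{1}{\omega^2}\sum_i\|\nabla f_i(x^*)\|^2$, which none of your other components controls. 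The paper instead tracks the gradient-corrected quantity $\mathbf{v}_k+\frac{1}{\omega_k}\nabla\tilde f(\bar{\mathbf{x}}_k)$ (its $V_{2,k}$), which does vanish at the fixed point, at the price of increments $\frac{1}{\omega_k}(\mathbf{g}^b_{k+1}-\mathbf{g}^b_k)$ that are then absorbed via \eqref{eq:propertyofbg3}.

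Second, and more fundamentally, your Lyapunov function has no primal--dual cross term. The dual recursion gives $\|\mathbf{v}_{k+1}\|^2=\|\mathbf{v}_k\|^2+2\eta_k\omega_k\mathbf{v}_k^\top\mathbf{L}\hat{\mathbf{x}}_k+\eta_k^2\omega_k^2\|\mathbf{L}\hat{\mathbf{x}}_k\|^2$: there is no negative multiple of $\|\mathbf{v}_k\|^2$ anywhere in it, and the sign-indefinite term $-\eta_k\omega_k\mathbf{x}_k^\top\mathbf{K}\mathbf{v}_k$ coming from the primal recursion, once Young's inequality is applied, only adds further \emph{positive} multiples of the dual error. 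So the coefficient of the dual error in $V_{k+1}-V_k$ is strictly positive for every choice of $a_1,a_2,a_3$, and your claimed descent cannot be derived from your $V_k$. The paper closes this loop with the cross component $V_{3,k}=\mathbf{x}_k^\top\mathbf{K}\mathbf{P}\bigl(\mathbf{v}_k+\frac{1}{\omega_k}\mathbf{g}^b_k\bigr)$, where $\mathbf{P}$ satisfies $\mathbf{P}\mathbf{L}=\mathbf{K}$: expanding it yields the negative term $-\eta_k\omega_k\|\mathbf{v}_k+\frac{1}{\omega_k}\mathbf{g}^b_k\|_{\mathbf{P}}^2$, the sole source of contraction on the dual error, and also yields $-\eta_k\gamma_k\hat{\mathbf{x}}_k^\top\mathbf{K}(\cdot)$, which --- precisely because $\gamma_k=\beta_1\omega_k$ --- cancels the sum of the corresponding cross terms from the primal and dual recursions. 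Without the $\frac{1}{\omega}\mathbf{g}^b_k$ shift and this $\mathbf{P}$-weighted cross term, the constant bookkeeping you flag as the only delicate part cannot even begin. The remaining ingredients of your plan (the $f(\bar x_k)$ descent with the $\sigma^2/n$ residual responsible for the linear speedup, the compression contraction with factor $1-\alpha_x r\varphi/2$, and the two divisions of the telescoped sum) do agree with the paper.
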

\begin{proof}
	See Appendix~\ref{app-convergence1}.
\end{proof}
From the right-hand side of~\eqref{eq:theo12}, it is easy to see that the linear speedup convergence rate can be achieved if  $\omega=\sqrt{T}/\sqrt{n}$, which is presented in the following result.
\begin{corollary}\label{coro1}
    Under the same assumptions and parameters settings in Theorem~\ref{theo:convergence1}, let $\omega=\beta_2\sqrt{T}/\sqrt{n}$, for any $T>n(\beta_3/\beta_2)^2$, then
\begin{align*}
    &\frac{1}{T}\sum_{k=0}^{T-1}\mathbb{E}\left[\frac{1}{n}\sum_{i=1}^n\Vert x_{i,k}-\bar{x}_k\Vert^2\right]\leq \mathcal{O}(\frac{n}{T}),\addtag\label{eq:coro11}\\
    &\frac{1}{T}\sum_{k=0}^{T-1}\mathbb{E}\Vert \nabla f(\bar{x}_k)\Vert^2\leq\mathcal{O}(\frac{1}{\sqrt{nT}})+\mathcal{O}(\frac{n}{T}).\addtag\label{eq:coro12}
 \end{align*}
\end{corollary}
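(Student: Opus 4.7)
The plan is to derive Corollary~\ref{coro1} as a direct consequence of Theorem~\ref{theo:convergence1} by substituting the specific choice $\omega=\beta_2\sqrt{T}/\sqrt{n}$ into the two bounds~\eqref{eq:theo11} and~\eqref{eq:theo12}. No new algorithmic analysis is required; what I need to do is (i) verify that the chosen $\omega$ is admissible under the hypotheses of Theorem~\ref{theo:convergence1}, and (ii) simplify each big-$\mathcal{O}$ term to identify the dominant orders.

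First I would check admissibility. Theorem~\ref{theo:convergence1} requires $\omega_k=\omega>\beta_3$. With $\omega=\beta_2\sqrt{T}/\sqrt{n}$, this is equivalent to $\beta_2\sqrt{T}/\sqrt{n}>\beta_3$, i.e.\ $T>n(\beta_3/\beta_2)^2$, which is precisely the restriction on $T$ imposed in the statement of Corollary~\ref{coro1}. All other parameter choices ($\gamma_k=\beta_1\omega_k$, $\eta_k=\beta_2/\omega_k$, $\alpha_x\in(0,1/r)$) remain unchanged, so Theorem~\ref{theo:convergence1} is applicable.

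Next I would carry out the substitution term by term. For the consensus bound~\eqref{eq:theo11}, the only $\omega$-dependent term on the right-hand side is $\mathcal{O}(1/\omega^2)$. Plugging in $\omega=\beta_2\sqrt{T}/\sqrt{n}$ gives $1/\omega^2 = n/(\beta_2^2 T)=\mathcal{O}(n/T)$, which dominates the $\mathcal{O}(1/T)$ term, yielding~\eqref{eq:coro11}. For the gradient bound~\eqref{eq:theo12}, I would compute the four terms individually:~$\omega/T=\beta_2/\sqrt{nT}=\mathcal{O}(1/\sqrt{nT})$, $1/(n\omega)=1/(\beta_2\sqrt{nT})=\mathcal{O}(1/\sqrt{nT})$, $1/T=\mathcal{O}(1/T)$, and $1/\omega^2=\mathcal{O}(n/T)$. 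Collecting these and keeping the dominant orders gives $\mathcal{O}(1/\sqrt{nT})+\mathcal{O}(n/T)$, exactly~\eqref{eq:coro12}.

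There is no real obstacle here, since the corollary is essentially an optimization of the free parameter $\omega$ in the bounds of Theorem~\ref{theo:convergence1}. The only subtlety worth spelling out is that the minimization of $\omega/T+1/(n\omega)$ over $\omega$ is balanced precisely by the choice $\omega\propto\sqrt{T}/\sqrt{n}$, which is what motivates the specific scaling $\omega=\beta_2\sqrt{T}/\sqrt{n}$ and explains the appearance of the linear-speedup rate $\mathcal{O}(1/\sqrt{nT})$.
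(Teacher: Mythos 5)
Your proposal is correct and follows exactly the route the paper intends: verify that $\omega=\beta_2\sqrt{T}/\sqrt{n}>\beta_3$ under the stated restriction $T>n(\beta_3/\beta_2)^2$, then substitute into~\eqref{eq:theo11} and~\eqref{eq:theo12} and collect dominant orders, with $\omega/T$ and $1/(n\omega)$ both becoming $\mathcal{O}(1/\sqrt{nT})$ and $1/\omega^2$ becoming $\mathcal{O}(n/T)$. This matches the paper's (essentially one-line) derivation of the corollary from Theorem~\ref{theo:convergence1}.
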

\begin{remark}
    The omitted parameters in $\mathcal{O}(\frac{1}{\sqrt{nT}})$ in~\eqref{eq:coro12} is unaffected by any parameters related to communication graphs. In other words, CP-SGD is suitable for any connected graph. Furthermore, it is important to note that, although the same convergence rate is established in references~\cite{alistarh2017qsgd,koloskova2019decentralized,singh2022sparq,vogels2020practical,koloskova2019decentralized1}, they require additional assumptions. Specifically, the methods~\cite{alistarh2017qsgd,koloskova2019decentralized,singh2022sparq,koloskova2019decentralized1} required the stochastic gradients have second bounded moment and the method~~\cite{ vogels2020practical} assumed that $\frac{1}{n}\|\lf_i(x)-\lf(x)\|^2$ is uniformly bounded. 
\end{remark}

Then we provide the linear convergence of CP-SGD with the P--Ł condition.
\begin{theorem}\label{theo:convergence2}
	Suppose Assumptions~\ref{as:strongconnected}--\ref{as:compressor} hold and in Algorithm~\ref{Al:CP-SGD}, let~$\gamma_k=\beta_1\omega_k,~\omega_k=\omega>\beta_3$, $\alpha_x\in(0,\frac{1}{r})$, and $\eta_k=\frac{\beta_2}{\omega_k}$, $\forall k\in\mathbb{N}$ where $~\beta_1>c_0,~\beta_2>0$. Then we have
 \begin{align*}
     \E\bigg[\sum_{i=1}^n\Vert x_{i,k}-\bar{x}_k\Vert^2+&n(f(\bar{x}_{k})-f^*)\bigg]\\
     &\leq(1-\eta\bar{\beta})^{k+1}\frac{V_0}{\check{c}_1}+\frac{\check{c}_2\sigma^2\eta}{\bar{\beta}\check{c}_1},\addtag\label{eq:linearconverge}
 \end{align*}
where $\bar{\beta},\beta_3,c_0,\check{c}_1,\check{c}_2,V_0$ are positive constants given in Appendices~\ref{app-convergence1} and~\ref{app-convergence2} with $0<\eta\bar{\beta}\leq\frac{2}{3}$.
\end{theorem}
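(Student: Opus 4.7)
The plan is to extend the Lyapunov analysis already built up in the proof of Theorem~\ref{theo:convergence1} by adding the optimality gap $n(f(\bar{x}_k)-f^*)$ to the Lyapunov function and invoking the P--\L{} condition to convert a gradient-norm term into geometric contraction of this gap. From the proof of Theorem~\ref{theo:convergence1} one already has one-step inequalities for the consensus error $\sum_i\|x_{i,k}-\bar{x}_k\|^2$, the compression error $\sum_i\|x_{i,k}-x_{i,k}^c\|^2$, and a dual-variable quantity in $v_{i,k}$; collecting these into a vector $A_k$ yields a bound of the form
\begin{equation*}
\E[A_{k+1}]\leq(1-\eta\bar{\beta})\,\E[A_k]+\check{c}_3\,\eta\,\E\|\lf(\bar{x}_k)\|^2+\check{c}_4\,\sigma^2\eta^2,
\end{equation*}
where the condition $\beta_1>c_0$ is exactly what makes the Laplacian-induced contraction dominate the compressor perturbations, just as in Theorem~\ref{theo:convergence1}.

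Next I would analyze the optimality gap itself. Since the Laplacian terms vanish under averaging, $\bar{x}_{k+1}-\bar{x}_k=-\eta_k\bar{g}_k$ with $\bar{g}_k=\frac{1}{n}\sum_i\tilde{\nabla}f_{i,k}$; applying $L_f$-smoothness of $f$, taking conditional expectation, bounding the cross term $\langle\lf(\bar{x}_k),\,\tfrac{1}{n}\sum_i\lf_i(x_{i,k})-\lf(\bar{x}_k)\rangle$ via $L_f$-smoothness of each $f_i$ against consensus error, and using Assumptions~\ref{as:unbiase}--\ref{as:boundedvar} produces a descent bound of the schematic form
\begin{align*}
\E[f(\bar{x}_{k+1})-f^*]\leq{}&\E[f(\bar{x}_k)-f^*]-\tfrac{\eta_k}{2}\E\|\lf(\bar{x}_k)\|^2\\
&+\tfrac{L_f\eta_k}{n}\E\bigl[\textstyle\sum_i\|x_{i,k}-\bar{x}_k\|^2\bigr]+\tfrac{L_f\sigma^2\eta_k^2}{2n}.
\end{align*}
The P--\L{} condition $\|\lf(\bar{x}_k)\|^2\geq 2\nu(f(\bar{x}_k)-f^*)$ then converts part of the $-\tfrac{\eta_k}{2}\E\|\lf(\bar{x}_k)\|^2$ term into linear contraction of the gap, while the remainder is reserved to absorb the residual $\check{c}_3\eta\,\E\|\lf(\bar{x}_k)\|^2$ from the $A_k$-recursion above.

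Combining the two ingredients with weights, I would form
\begin{equation*}
V_k=\check{c}_1\Bigl(\sum_{i=1}^n\|x_{i,k}-\bar{x}_k\|^2+n(f(\bar{x}_k)-f^*)\Bigr)+(\text{compressor/dual terms})
\end{equation*}
and pick the coefficient of the gap large enough to swallow the $\E\|\lf(\bar{x}_k)\|^2$ residuals coming from the smoothness expansion of $A_{k+1}$. This yields the clean contractive recursion
\begin{equation*}
\E[V_{k+1}]\leq(1-\eta\bar{\beta})\,\E[V_k]+\check{c}_2\,\sigma^2\eta^2,
\end{equation*}
and iterating while using $\eta\sum_{j=0}^{k}(1-\eta\bar{\beta})^j\leq 1/\bar{\beta}$ gives exactly~\eqref{eq:linearconverge}.

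The main obstacle is the coefficient balancing. The cross terms produced by the compressor, the Laplacian coupling, the stochastic-gradient noise, and the P--\L{} descent must all be absorbed simultaneously into a single contractive bound, and the resulting rate $\bar{\beta}$ must be compatible with the fixed step-size $\eta_k=\beta_2/\omega$ and the constraint $0<\eta\bar{\beta}\leq 2/3$ (which effectively imposes a lower bound on $\omega$, consistent with $\omega>\beta_3$). Getting explicit expressions for $\check{c}_1,\check{c}_2,\bar{\beta}$ in terms of $\beta_1,\beta_2,\omega,L_f,\nu,\varphi,r,\alpha_x$, and the Laplacian spectrum will consume the bulk of the calculation, but once $V_k$ satisfies the contractive recursion the geometric decay to an $\mathcal{O}(\eta)$-neighborhood follows immediately.
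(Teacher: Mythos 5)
Your proposal is correct and follows essentially the same route as the paper: the paper's Lyapunov function $V_k=\sum_{i=1}^5 V_{i,k}$ from Theorem~\ref{theo:convergence1} already contains the optimality gap $V_{4,k}=n(f(\bar{x}_k)-f^*)$ together with the consensus, dual, and compression terms, and the proof of Theorem~\ref{theo:convergence2} simply sandwiches $\check{c}_1 U_k\leq V_k\leq\beta_8 U_k$, applies the P--\L{} inequality $\|\bg_k^b\|^2\geq 2\nu V_{4,k}$ to turn the leftover $-\tfrac{\eta}{4}\|\bg_k^b\|^2$ term of \eqref{eq:upperofV14} into a contraction of the whole of $U_k$, and iterates -- exactly the weighting-and-absorption scheme you describe.
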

\begin{proof}
	See Appendix~\ref{app-convergence2}.
\end{proof} 
\begin{remark}
    Since $0<\eta\bar{\beta}\leq\frac{2}{3}$, Theorem~\ref{theo:convergence2} shows that CP-SGD can linearly converge to a neighborhood of the global optimum. It can be seen from the second term on the right-hand of~\eqref{eq:linearconverge}, the inaccuracy of convergence is caused by the variance of the stochastic gradients. In other words, CP-SGD can linearly converge to the optimum when gradients are available.
\end{remark}

 
 We then consider the convergence of CP-SGD with biased stochastic gradients.
 \begin{theorem}\label{theo:convergence4}
	Suppose Assumptions~\ref{as:strongconnected}--\ref{as:independent} and~\ref{as:boundedvar}--\ref{as:compressor} and in Algorithm~\ref{Al:CP-SGD}, let~$\gamma_k=\beta_1\omega_k,~\omega_k=\omega>\check{\beta}_3$, $\alpha_x\in(0,\frac{1}{r})$, and $\eta_k=\frac{\beta_2}{\omega_k}$, $\forall k\in\mathbb{N}$ where $~\beta_1>c_0,~\beta_2>0$. Then we have
 \begin{align*}
     \E[\sum_{i=1}^n\Vert x_{i,k}-\bar{x}_k\Vert^2+&n(f(\bar{x}_{k})-f^*)]\\
     &\leq(1-\eta\bar{\beta})^{k+1}\frac{V_0}{\check{c}_1}+\frac{\check{c}_3n\sigma^2}{\bar{\beta}\check{c}_1},\addtag\label{eq:linearconverge1}
 \end{align*}
where $\bar{\beta},\check{\beta}_3,c_0,\check{c}_1,\check{c}_3,V_0$ are positive constants given in Appendices~\ref{app-convergence1},~\ref{app-convergence2}, and~\ref{app-convergence4} with $0<\eta\bar{\beta}\leq\frac{2}{3}$.
\end{theorem}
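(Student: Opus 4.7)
The plan is to mirror the Lyapunov argument used to prove Theorem~\ref{theo:convergence2} and to isolate precisely where dropping Assumption~\ref{as:unbiase} forces an $O(\sigma^2)$ (rather than $O(\eta\sigma^2)$) residual and an extra factor of $n$. First I would reuse the same Lyapunov function $V_k$ constructed in the proof of Theorem~\ref{theo:convergence2}, which combines the consensus error $\sum_i\Vert x_{i,k}-\bar{x}_k\Vert^2$, quadratic terms in the dual/auxiliary variables $v_{i,k}$, the compression-tracking terms $\Vert x_{i,k}-x_{i,k}^c\Vert^2$, and the optimality gap $n(f(\bar{x}_k)-f^*)$. The consensus, auxiliary-variable, and compression parts of the one-step inequality are unaffected by the biasedness of the stochastic gradient because their bounds are pathwise and rely only on Assumption~\ref{as:compressor}, $L_f$-smoothness, and Assumption~\ref{as:boundedvar}; this part therefore still yields contraction with the same factor $1-\eta\bar{\beta}$ appearing in Theorem~\ref{theo:convergence2}.

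Next I would track exactly the places where unbiasedness was invoked. These enter through the descent inequality applied to $f(\bar{x}_{k+1})-f(\bar{x}_k)$, which after averaging~\eqref{eq:iterationxp2} produces (i) a cross term $-\eta\langle\lf(\bar{x}_k),\frac{1}{n}\sum_i g_i(x_{i,k},\xi_{i,k})\rangle$ and (ii) a quadratic term containing $\Vert\frac{1}{n}\sum_i g_i(x_{i,k},\xi_{i,k})\Vert^2$. In Theorem~\ref{theo:convergence2} the martingale cancellation $\E[g_i-\lf_i(x_{i,k})\mid\mathcal{F}_k]=0$ removes the cross term in (i) and reduces the variance contribution of (ii) to $\eta^2\sigma^2/n$. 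Without Assumption~\ref{as:unbiase}, I would instead add and subtract $\lf_i(x_{i,k})$ and apply Young's inequality $2\langle u,\,g_i-\lf_i(x_{i,k})\rangle\leq\epsilon\Vert u\Vert^2+\epsilon^{-1}\Vert g_i-\lf_i(x_{i,k})\Vert^2$, choosing $\epsilon$ small enough that the $\epsilon\Vert u\Vert^2$ contributions are absorbed into the descent budget and, after invoking the P--Ł condition, into the contraction slack associated with $\bar{\beta}$. Taking expectation of the remaining $\epsilon^{-1}\Vert g_i-\lf_i(x_{i,k})\Vert^2$ yields $\epsilon^{-1}\sigma^2$ per agent via Assumption~\ref{as:boundedvar}. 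For (ii), the splitting $\Vert g_i\Vert^2\leq 2\Vert g_i-\lf_i(x_{i,k})\Vert^2+2\Vert\lf_i(x_{i,k})\Vert^2$ combined with smoothness produces a further $O(\sigma^2)$ contribution plus $\Vert\lf_i\Vert^2$ terms that are again absorbed through the P--Ł condition and the consensus error.

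Combining these ingredients gives a recursion of the form $\E V_{k+1}\leq(1-\eta\bar{\beta})\E V_k+\check{c}_3 n\sigma^2$, where the factor $n$ arises because, in contrast to the unbiased setting, each of the $n$ agents now contributes an independent non-vanishing bias term through its Young bound rather than benefiting from $1/n$-scale averaging. Iterating the recursion yields $\E V_k\leq(1-\eta\bar{\beta})^k V_0+\check{c}_3 n\sigma^2/(\eta\bar{\beta})$, and dividing by the same coercivity constant $\check{c}_1$ used for Theorem~\ref{theo:convergence2} delivers~\eqref{eq:linearconverge1}. The threshold $\check{\beta}_3$ on $\omega$ will differ from $\beta_3$ only by the extra slack needed to absorb the Young constants $\epsilon$, so the contraction rate matches that of Theorem~\ref{theo:convergence2}; only the steady-state neighborhood is enlarged.

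The main obstacle will be verifying that $\epsilon$ can be selected uniformly in $k$ and independently of $\sigma$ so that the contraction factor $1-\eta\bar{\beta}$ is preserved and only the residual radius changes. A second delicate point is the bookkeeping of the $n$-dependence: one must confirm that after splitting each agent's cross term by Young and summing, the per-agent $\sigma^2$ contributions aggregate to $n\sigma^2$ (not $\sigma^2/n$ as in the unbiased case), which is the precise source of the factor $n$ distinguishing~\eqref{eq:linearconverge1} from~\eqref{eq:linearconverge}.
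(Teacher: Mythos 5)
Your overall strategy is the paper's: keep the Lyapunov function $V_k=\sum_{i=1}^5 V_{i,k}$ from Theorem~\ref{theo:convergence2}, and wherever unbiasedness was used to cancel a cross term, substitute Young's inequality so that the $\sigma^2$ contribution appears at first order in $\eta$ per step instead of second order. However, there are two concrete problems. First, you assert that the consensus, auxiliary-variable and compression parts of the one-step inequality are ``pathwise'' and hence unaffected by dropping Assumption~\ref{as:unbiase}. That is not true for the consensus term: expanding $\|\x_{k+1}\|_{\bk}^2$ with $\x_{k+1}=\x_k-\eta_k(\gamma_k\bl\hx_k+\omega_k\vv_k+\g_k^s)$ produces cross terms linear in $\g_k^s$, and in the unbiased proof these are evaluated via $\E_{\xi_k}[\g_k^s]=\g_k$, i.e.~\eqref{eq:propertyofsg1}. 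The same happens in the mixed term $V_{3,k}=\x_k^\top\bk\bp(\vv_k+\frac{1}{\omega_k}\g_k^b)$. This is precisely why the paper re-derives~\eqref{eq:lyapunovofv1},~\eqref{eq:lyapunovofv3}, and~\eqref{eq:lyapunovofv4} as~\eqref{eq:lyapunovofvv1},~\eqref{eq:lyapunovofvv3}, and~\eqref{eq:lyapunovofvv4}, with modified constants $\check{\epsilon}_1,\check{\epsilon}_2,\check{\epsilon}_6,\check{\epsilon}_7$ (hence the new threshold $\check{\beta}_3$) and noise terms of order $\eta_k n\sigma^2$; only the bounds~\eqref{eq:lyapunovofv2} and~\eqref{eq:lyapunovofv5} survive unchanged. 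Your Young-inequality device does repair all of these terms, but as written your proposal would leave the $V_1$ and $V_3$ contributions unhandled because you declared them unaffected and confined the bias analysis to the descent inequality for $f(\bar{x}_{k+1})$.

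Second, your final bookkeeping does not reproduce~\eqref{eq:linearconverge1}. Each cross term you treat carries an explicit factor $\eta$, so Young's inequality yields a per-step noise contribution of order $\eta n\sigma^2$; the correct recursion is $\E[V_{k+1}]\le(1-\eta\bar{\beta})\E[V_k]+\check{c}_3 n\sigma^2\eta$, whose fixed point is $\check{c}_3 n\sigma^2/\bar{\beta}$ as in the theorem. You instead write the additive term as $\check{c}_3 n\sigma^2$ and obtain a residual $\check{c}_3 n\sigma^2/(\eta\bar{\beta})$, which exceeds the claimed bound by a factor $1/\eta$. Relatedly, the ``factor of $n$'' is not what distinguishes~\eqref{eq:linearconverge1} from~\eqref{eq:linearconverge}: both residuals are $\mathcal{O}(n)$ in $n$ (since $\check{c}_2=\mathcal{O}(n)$); the difference is the loss of the factor $\eta$, exactly because without unbiasedness the noise enters the recursion at first rather than second order in the step-size.
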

\begin{proof}
	See Appendix~\ref{app-convergence4}.
\end{proof} 
\begin{remark}
From~\eqref{eq:linearconverge1}, we know that CP-SGD linearly converges to a neighborhood of the global
optimum even without the unbiased assumption. Furthermore, compared~\eqref{eq:linearconverge} and~\eqref{eq:linearconverge1}, it can be seen that the size of neighbourhood is different. More specifically, since $\check{c}_2=\mathcal{O}(n)$, the second term of right-hand side of~\eqref{eq:linearconverge} and~\eqref{eq:linearconverge1} are in an order of $\mathcal{O}(n\eta)$ and~$\mathcal{O}(n)$, respectively. In other words, the former achieves better accuracy than the latter when the step-size $\eta$ is sufficiently small.
\end{remark}

\section{simulation}\label{simulation}
In this section, simulations are given to verify the validity of CP-SGD. We consider a distributed estimation problem with $n=6$ agents and they communicate on a connected undirected graph, whose topology is shown in Fig.~\ref{Fig:graph}. Specifically, we assume agent $i$ aims to solve the following nonconvex distributed binary classification problem~\cite{antoniadis2011penalized,yi2021linear,sun2019distributed}
\begin{align*}
	&~~~~~\min_{x}f(x)=\frac{1}{6}\sum_{i=1}^6 f_i,\\\addtag\label{eq:simulation0}
 &f_i\left(x_i\right)=\frac{1}{m} \sum_{j=1}^m \log \left(1+\exp \left(-u_{i j} x_i^{\top} v_{i j}\right)\right)+\sum_{s=1}^d \frac{\lambda \alpha x_{i, s}^2}{1+\alpha x_{i, s}^2},
\end{align*}
where $v_{ij}\in\R^d$ is the feature vector and randomly generated with standard Gaussian distribution $\mathcal{N}(0,1)$, $u_{ij}\in\{-1,1\}$ is the label and randomly generated with uniformly distributed pseudorandom integers taking the values $\{-1,1\}$ and $x_{i, s}$ is the $s$-th coordinate of $x_i$. Specifically, we assume $\lambda=0.001,\alpha=1,m=200$ and the initial value of each agent $x_i(0)$ is randomly chosen in $[0,1]^{10}$. Furthermore, we assume each agent~$i$ has access to a noisy gradient $g^s_{i,k}(x_i)=\lf_i(x_i)+\delta_s$, where $\delta_s\sim \mathcal{N}(0,0.5)$ is the noise.

We consider the compressors $C_1$ and $C_2$ in \eqref{eq:compressor1} and~\eqref{eq:compressor2}, respectively, with $k=2$ and $b=2$ in the following simulations. We compare our algorithm with time-varying parameters and constant parameters (CP-SGD-T and CP-SGD-F) with the distributed SGD algorithm (DSGD) and compressed algorithm~\cite{koloskova2019decentralized1} (Choco-SGD) under different parameters as specified in TABLE~\ref{tab:parameter}. We use the residual $R_k\triangleq\min_{t\leq k}\Vert\x_t-\x^*\Vert^2$ to evaluate the convergence rate.

 \begin{table}[h]
	\centering
	\begin{tabular}{lc c c c c}\hline
		Algorithm & Compressor & $\gamma_k$ & $\omega_k$ & $\eta_k$ & $\alpha_x$ \\\hline
		DSGD &---&---&---&0.05&---\\
		Choco-SGD-C1 &$C_1$&0.2&---&0.05&---\\
		CP-SGD-F-C1 &$C_1$&4&0.5&0.05&0.2\\
		CP-SGD-F-C2  &$C_2$&4&0.5&0.05&0.2\\
  	CP-SGD-T-C1  &$C_1$&45k&5k&$10^{-4}/k$&0.2\\\hline
	\end{tabular}
	\caption{Parameter setting for different algorithms.}
	\label{tab:parameter}
\end{table}

\begin{center}
	\begin{figure}
		\center
		\usetikzlibrary{positioning, fit, calc}

\begin{tikzpicture}[->,>=stealth',shorten >=1pt,auto,node distance=2cm,
thick,main node/.style={circle,fill=gray!20,draw,font=\sffamily\small\bfseries}]

\node[main node] (1) {1};
\node[main node] (2) [ right of=1] {2};
\node[main node] (3) [ below right of=2] {3};
\node[main node] (4) [ below left of=3] {4};
\node[main node] (5) [ left of=4] {5};
\node[main node] (6) [ below left of=1] {6};

%
%

\path[every node/.style={font=\sffamily\small}]
(1) edge  node[] {} (4)
edge  node[left] {} (2)	  
edge   node[above]{} (6)
(2) edge  node[left] {} (5)
edge  node[above]{} (1)
edge  node[left] {} (3)
(3) edge  node[left] {} (4)
edge  node[left] {} (2)
(4) edge  node[above]{} (3)
edge  node[above]{} (1)	
edge  node[left] {} (5)
(5) edge  node[left] {} (2)
edge node[left] {} (4)
edge node[left] {} (6)
(6) edge node[above]{} (5)
edge node[above]{} (1);
\end{tikzpicture}
		\caption{A connected undirected graph consisting of 6 agents.}
		\label{Fig:graph}
	\end{figure}
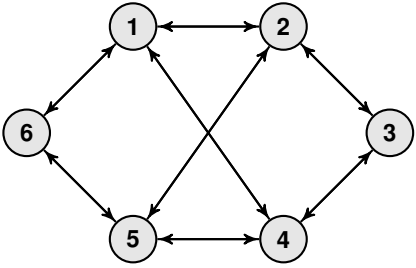
\end{center}
\begin{figure}
	\centering
	\includegraphics[width=1\linewidth]{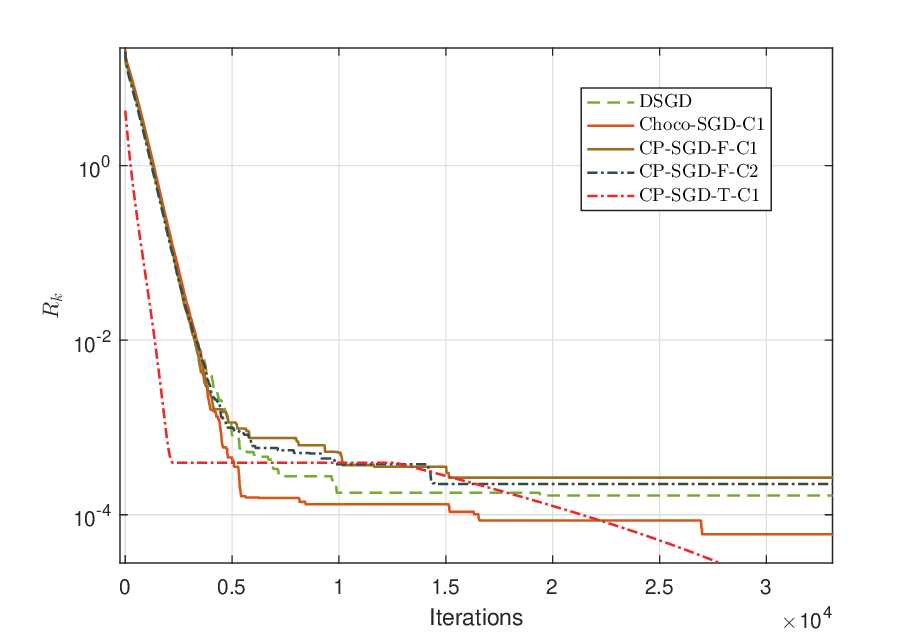}
	\caption{The evolution of residual under DSGD, Choco-SGD, and CP-SGD.}
	\label{fig:convergence1}
\end{figure}
\begin{figure}
	\centering
	\includegraphics[width=1\linewidth]{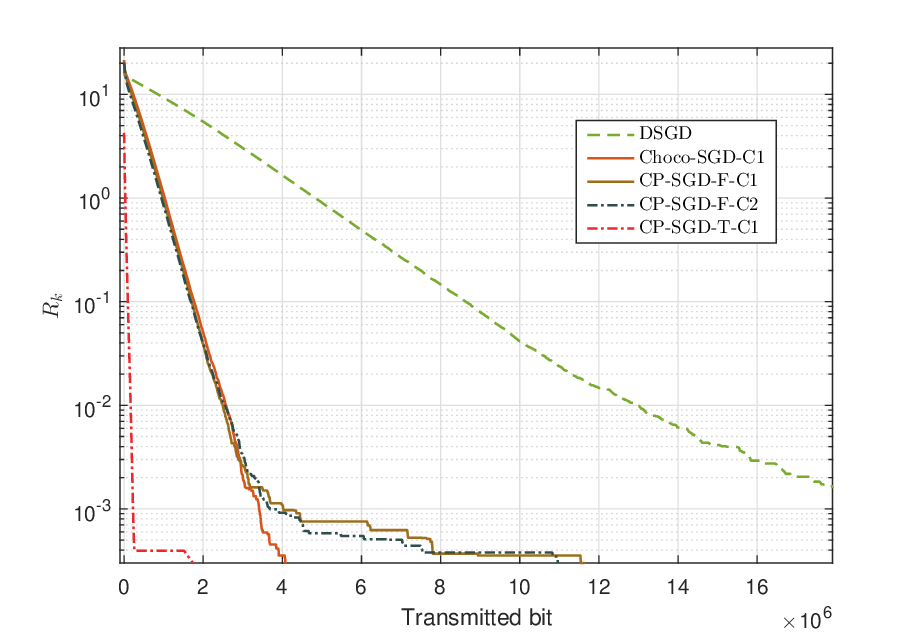}
	\caption{The evolution of residual with respect to the transmitted bits under DSGD, Choco-SGD, and CP-SGD.}
	\label{fig:bit1}
\end{figure}

Fig.~\ref{fig:convergence1} shows that $\x(k)$ converges to the optimal point $\x^*$ under CP-SGD with different step-sizes and compressors. Since residue $R_k$ represents the minimum value in the time interval $[0,k]$, the descent will be step-like as shown in~Fig.~\ref{fig:convergence1}. The convergence rate of CP-SGD-T and CP-SGD-F are faster than and close to Choco-SGD~\cite{koloskova2019decentralized1} under same compressor, respectively. Furthermore, Fig.~\ref{fig:bit1} illustrates that compared to other algorithms, CP-SGD-T converges to the same accuracy with fewer bits. This also demonstrates the efficiency of CP-SGD-T.

\section{conclusion}\label{conclusion}
In this paper, we investigated distributed nonconvex optimization under limited communication. Specifically, we proposed a compressed primal--dual SGD algorithm. For a general class of compressors with bounded relative compression errors, we established the linear speedup convergence rate $\mathcal{O}(1/\sqrt{nT})$ for smooth cost functions where $T$ and $n$ are the number of iterations and agents, respectively. Note that the convergence rate does not depend on any parameters that are related to communication graph. If the global cost function additionally satisfies the Polyak-Łojasiewicz condition, we proved that the proposed algorithm linearly converges to a neighborhood of the global optimum. Furthermore, we also proved that, even for the biased stochastic gradients, the proposed algorithm still linearly converges to a neighborhood of the global optimum but with a different neighborhood. Future work includes extending the study to directed graphs and considering the privacy issues.

\appendices
\section{Supporting Lemmas}
We first introduce some useful vector and matrix inequalities.
\begin{lemma}\label{lemma:usefulinequalities}
For $u,v\in\R^d$, and $\forall s>0$ we have
\begin{align*}
	&~~~~~~u^\top v\leq \frac{s}{2}\Vert u\Vert^2+\frac{1}{2s}\Vert v\Vert^2,\addtag\label{eq:inequality1}\\
	&\Vert u+v\Vert^2\leq(1+s)\Vert u\Vert^2+(1+\frac{1}{s})\Vert v\Vert^2.\addtag\label{eq:inequality2}
\end{align*}
\end{lemma}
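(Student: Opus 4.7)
The plan is to derive both inequalities as immediate consequences of the nonnegativity of a squared norm, so the proof is essentially two short calculations rather than anything structural.

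For the first inequality, I would start from the observation that, for any $s>0$,
\[
\Bigl\|\sqrt{s}\,u-\tfrac{1}{\sqrt{s}}\,v\Bigr\|^2\geq 0.
\]
Expanding the left-hand side yields $s\|u\|^2-2u^\top v+\tfrac{1}{s}\|v\|^2\geq 0$, and rearranging gives the claimed bound $u^\top v\leq \tfrac{s}{2}\|u\|^2+\tfrac{1}{2s}\|v\|^2$. This is Young's inequality in its vector form, and the parameter $s$ just reflects the freedom in the scaling of $u$ versus $v$ before taking the square.

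For the second inequality, the natural route is to expand the square and plug in what was just proved. Writing
\[
\|u+v\|^2=\|u\|^2+2u^\top v+\|v\|^2
\]
and applying the first inequality to the cross term $2u^\top v$ (i.e., $u^\top v\leq \tfrac{s}{2}\|u\|^2+\tfrac{1}{2s}\|v\|^2$, so $2u^\top v\leq s\|u\|^2+\tfrac{1}{s}\|v\|^2$) yields
\[
\|u+v\|^2\leq (1+s)\|u\|^2+\bigl(1+\tfrac{1}{s}\bigr)\|v\|^2,
\]
which is exactly the stated bound.

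There is no real obstacle here; both inequalities are standard consequences of $\|\cdot\|^2\geq 0$, with the only minor subtlety being the choice of the scaling $\sqrt{s}$ and $1/\sqrt{s}$ in the first step so that the free parameter $s$ appears symmetrically in the final form. Since the second inequality simply reuses the first with the same $s$, no separate argument or case analysis is needed, and the lemma is essentially immediate.
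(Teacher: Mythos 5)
Your proof is correct: both inequalities follow exactly as you describe, the first from expanding $\bigl\Vert\sqrt{s}\,u-\tfrac{1}{\sqrt{s}}\,v\bigr\Vert^2\geq 0$ and the second from expanding $\Vert u+v\Vert^2$ and bounding the cross term with the first. The paper states this lemma without proof as a standard fact, and your argument is precisely the canonical derivation it implicitly relies on.
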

\begin{lemma}\label{lemma:lsmooth}
	\cite{liao2022compressed} Suppose the function $f(x): \R^d\mapsto\R$ is smooth with constant $L_f>0$, we have
 \begin{align*}
     \|\lf(x)\|^2\leq2L_f(f(x)-f^*),\addtag\label{eq:lsmooth}.
 \end{align*}
	\end{lemma}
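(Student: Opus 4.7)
The plan is to use the descent lemma implied by $L_f$-smoothness, namely the quadratic upper bound
\begin{align*}
f(y)\leq f(x)+\lf(x)^\top(y-x)+\frac{L_f}{2}\|y-x\|^2,
\end{align*}
which follows directly from the smoothness inequality \eqref{eqn:smooth1} stated in Assumption~\ref{as:smooth}. Because this bound holds for every $y\in\R^d$, I can minimize the right-hand side over $y$ for fixed $x$. The minimizer is obtained by completing the square, giving $y^\star=x-\frac{1}{L_f}\lf(x)$, at which the right-hand side equals $f(x)-\frac{1}{2L_f}\|\lf(x)\|^2$.

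Since $f^\star\leq f(y^\star)$ by definition of the global minimum (and $f^\star>-\infty$ is guaranteed by Assumption~\ref{as:finite}), substituting $y=y^\star$ into the descent lemma yields
\begin{align*}
f^\star\leq f(y^\star)\leq f(x)-\frac{1}{2L_f}\|\lf(x)\|^2.
\end{align*}
Rearranging immediately gives the desired inequality $\|\lf(x)\|^2\leq 2L_f(f(x)-f^\star)$. There is no real obstacle here; the only step requiring care is ensuring that the descent lemma is being invoked at the specific gradient-step point $y^\star$ so that a clean closed-form minimum is obtained, and that $f^\star\leq f(y^\star)$ is invoked rather than any other lower bound. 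The entire argument is a few lines and is a textbook consequence of smoothness plus the existence of a finite minimum.
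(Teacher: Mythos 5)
Your proof is correct and is the standard argument: the paper itself does not prove this lemma but simply cites it from~\cite{liao2022compressed}, and your derivation (minimizing the quadratic upper bound from \eqref{eqn:smooth1} at $y^\star=x-\frac{1}{L_f}\lf(x)$ and invoking $f^\star\leq f(y^\star)$) is exactly the textbook route one would expect that reference to follow. No gaps.
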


\begin{lemma}
~(Lemma 2 in~\cite{yi2022communication}) Suppose Assumption~\ref{as:strongconnected} holds, let $L$ be the Laplacian matrix of the graph $G$ and $K_n=\mathbf{I}_n-\frac{1}{n}\mathbf{1}_n\mathbf{1}_n^\top$. Then $L$ and $K_n$ are positive semi-definite, $L\leq\bar{\lambda}_L\mathbf{I}_n$, $\bar{\lambda}_{K_n}=1$, 
\begin{align}
&K_nL=LK_n=L,\label{eq:propertyofk}\\
&0\leq\underline{\lambda}_LK_n\leq L\leq\bar{\lambda}_LK_n.\label{eq:propertyofk1}
\end{align}
Moreover, there exists an orthogonal matrix $[r~R]\in\R^{n\times n}$ with $r=\frac{1}{\sqrt{n}}\mathbf{1}_n$ and $R\in\R^{n\times(n-1)}$ such that
\begin{align}
	&PL=LP=K_n,\label{eq:propertyofp}\\
	&\bar{\lambda}_L^{-1}\mathbf{I}_n\leq P\leq\underline{\lambda}_L^{-1}\mathbf{I}_n,\label{eq:propertyofp1}
\end{align}
where 
\begin{align*}
	P=\begin{bmatrix}
		r&R
	\end{bmatrix}
	\begin{bmatrix}
		\lambda_n^{-1}&0\\
		0&\Lambda_1^{-1}
	\end{bmatrix}
	\begin{bmatrix}
		r^\top\\
		R^\top\top
	\end{bmatrix},
\end{align*}
with $\Lambda_1=\text{diag}([\lambda_2,\dots,\lambda_n])$ and $0\leq\lambda_2\leq\cdots\leq\lambda_n$ being the nonzero eigenvalues of $L$.
\end{lemma}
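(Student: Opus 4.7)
The plan is to derive every stated property from a single orthogonal spectral decomposition of $L$. Under Assumption~\ref{as:strongconnected}, the graph is undirected and connected, so $L=D-W$ is symmetric and satisfies $x^\top L x=\tfrac12\sum_{(i,j)\in\mathcal{E}}w_{ij}(x_i-x_j)^2\ge 0$. Connectedness forces $\ker L=\mathrm{span}(\mathbf{1}_n)$, hence $L$ is positive semi-definite with eigenvalues $0=\lambda_1<\underline{\lambda}_L=\lambda_2\le\cdots\le\lambda_n=\bar{\lambda}_L$ in an orthonormal eigenbasis $[r~R]$, where $r=\mathbf{1}_n/\sqrt{n}$ spans the kernel and the columns of $R\in\R^{n\times(n-1)}$ span its orthogonal complement. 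This yields $L=R\Lambda_1 R^\top$.

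Next I would dispatch the properties of $K_n$ and its coupling with $L$. Since $[r~R]^\top[r~R]=I_n$, one has $K_n=I_n-rr^\top=RR^\top$, which is positive semi-definite with eigenvalues $0$ (simple, eigenvector $r$) and $1$ (multiplicity $n-1$), so $\bar{\lambda}_{K_n}=1$. The identities $K_nL=LK_n=L$ in~\eqref{eq:propertyofk} follow from $\mathbf{1}_n^\top L=0$, and the sandwich bound in~\eqref{eq:propertyofk1} is read off the decompositions: on $\mathrm{range}(R)$, $K_n$ acts as the identity while $L$ has eigenvalues in $[\underline{\lambda}_L,\bar{\lambda}_L]$, and both operators vanish on $\mathrm{span}(r)$. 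The upper bound $L\le\bar{\lambda}_L I_n$ is immediate from the eigendecomposition.

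For the matrix $P$, I would plug the stated definition into the orthogonal decomposition of $L$ and compute block-by-block. The orthogonality of $[r~R]$ yields
\begin{align*}
PL=[r~R]\,\mathrm{diag}(\lambda_n^{-1},\Lambda_1^{-1})\,\mathrm{diag}(0,\Lambda_1)\,[r~R]^\top=RR^\top=K_n,
\end{align*}
and $LP=K_n$ by symmetry, giving~\eqref{eq:propertyofp}. The eigenvalues of $P$ are $\lambda_n^{-1}=\bar{\lambda}_L^{-1}$ together with $\lambda_2^{-1},\dots,\lambda_n^{-1}$; the smallest is $\bar{\lambda}_L^{-1}$ and the largest is $\lambda_2^{-1}=\underline{\lambda}_L^{-1}$, which establishes~\eqref{eq:propertyofp1}.

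No substantive obstacle is expected. The only subtle point is the choice to insert $\lambda_n^{-1}$ rather than $0$ in the $rr^\top$ slot of $P$: this keeps $P$ strictly positive definite so that the two-sided bound in $\mathbf{I}_n$ (rather than $K_n$) is meaningful, while still leaving $PL=LP=K_n$ intact because the $rr^\top$ component is annihilated by $L$. Once this observation is made, the entire lemma reduces to bookkeeping on a single spectral decomposition.
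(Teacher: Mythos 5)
Your proof is correct and complete: every claim in the lemma does reduce to the single orthogonal spectral decomposition $L=[r~R]\,\mathrm{diag}(0,\Lambda_1)\,[r~R]^\top$ together with $K_n=RR^\top$, and your block computations for $PL=LP=K_n$ and the two-sided eigenvalue bounds on $P$ are exactly right, including the observation that placing $\lambda_n^{-1}$ (rather than $0$) in the $rr^\top$ slot is what makes $P$ positive definite and the lower bound $\bar{\lambda}_L^{-1}\mathbf{I}_n\leq P$ valid. The paper itself gives no proof of this lemma --- it is imported verbatim as Lemma~2 of~\cite{yi2022communication} --- so your argument serves as a self-contained verification of the cited result, carried out by the standard spectral route one would expect.
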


\section{The proof of Theorem~\ref{theo:convergence1}}\label{app-convergence1}
\subsection{Notations and useful lemma}
Denote $\tilde{f}(\x_{k})=\sum_{i=1}^nf_i(x_{i,k}),~\bl=L\otimes \bi_d$, $\bp=P\otimes\bi_d$, $\g_k=\nabla\tilde{f}(\x_{k})$, $\bg_k=\bh\g_k$, $\g_k^b=\nabla\tilde{f}(\bx_{k})$, $\bg_k^b=\bh\g_k^b=\mathbf{1}_n\otimes\nabla f(\bar{x}_k)$. We first construct some auxiliary functions and provide the following lemma
\begin{lemma}\label{lemma:lyapunov}
Suppose Assumptions~\ref{as:strongconnected}--\ref{as:boundedvar}, and \ref{as:compressor} hold. Under Algorithm~\ref{Al:CP-SGD}, if $\alpha_x\in(0,\frac{1}{r})$ and $\{\omega_k\}$ is non-decreasing, we have
\begin{align*}
    &\E_{\xi_k}[V_{1,k+1}]\leq V_{1,k}-\left\|\mathbf{x}_k\right\|_{\frac{\eta_k \gamma_k}{2} \mathbf{L}-\frac{\eta_k}{2} \mathbf{K}-\frac{\eta_k}{2}(1+5 \eta_k) L_f^2 \boldsymbol{K}}^2\\
    &~~~+\left\|\hat{\mathbf{x}}_k\right\|_{\frac{3\eta_k^2 \gamma_k^2}{2} \mathbf{L}^2}^2+ \frac{\eta}{2}(\gamma+2 \omega)\bar{\lambda}_L\left\|\hat{\mathbf{x}}_k-\mathbf{x}_k\right\|^2\\
&~~~ -\eta_k \omega_k \hat{\mathbf{x}}_k^{\top} \mathbf{K}\left(\mathbf{v}_k+\frac{1}{\omega_k} \g^b_k\right)\\
&~~~+\frac{6 \eta_k^2 \omega_k^2 \bar{\lambda}_L+\eta_k \omega_k}{4}\left\|\mathbf{v}_k+\frac{1}{\omega_k} \g^b_k\right\|^2_\bp+2n\sigma^2\eta_k^2,\addtag\label{eq:lyapunovofv1}\\
&V_{2,k+1}\leq V_{2,k}+\left(1+b_k\right)\eta_k\omega_k(1+\beta_1)\hx_k^\top\bk\left(\vv_k+\frac{1}{\omega_k}\g_k^b\right) \\
&~~~+\|\hx_k\|_{\left(1+b_k\right) \left(\frac{\eta_k^2\omega_k}{2}\left(\omega_k+\gamma_k\right) \bl+\frac{\eta_k^2}{2}\bk\right)}\\
&~~~+\frac{1}{2}\left(b_k+b_k\beta_1+\frac{\eta_k}{2}+\frac{b_k\eta_k}{2}\right)\left\|\vv_k+\frac{1}{\omega_k} \g_k^b\right\|_\bp^2\\
&~~~+(1+b_k)\bigg(\left(\frac{(1+\beta_1)^2}{\eta_k\omega_k^2}+\frac{1+\beta_1}{2\omega_k^2}\right)\frac{1}{\underline{\lambda}_L}\\
&~~~+\frac{1}{2}\bigg)\eta_k^2L_f^2\E_{\xi_k}[\Vert\bg_k^s\Vert^2]\\
&~~~+(1+b_k)\eta_k\beta_1\hx_k^\top\bk(\g_{k+1}^b-\g_k^b)\\
&~~~+\frac{1}{2\underline{\lambda}_L}(b_k+b_k^2)(1+\beta_1)\Vert\g_{k+1}^b\Vert^2,\addtag\label{eq:lyapunovofv2}\\
   &V_{3,k+1}\leq V_{3,k}-(1+b_k)\eta_k\gamma_k\hx_k^\top\bk(\vv_k+\frac{1}{\omega_k}\g_k^b)\\
   &~~~+\Vert\hx_k\Vert^2_{\eta_k(\omega_k\bk+\frac{b_k\gamma_k}{8}\bl)+\eta_k^2(\omega_k^2\bk+\frac{b_k}{2}\bk-\omega_k\gamma_k\bl)}\\
&~~~+\Vert\x_k\Vert_{(\frac{\eta_k(\omega_k+2)}{4})\bk+(\frac{\eta_k}{4}+\frac{3\eta_k^2}{2})L_f^2\bk}^2\\
&~~~-(1+b_k)\frac{\eta_k\gamma_k}{\omega_k}\hx_k^\top\bk(\g_{k+1}^b-\g_k^b)+\frac{\eta_k}{8}\Vert\bg_k\Vert^2\\
 &~~~-\Vert\vv_k+\frac{1}{\omega_k}\g_k^b\Vert_{\eta_k(\omega_k-3\underline{\lambda}_L^{-1})\bp-\eta_k^2(\underline{\lambda}_L^{-1}-\frac{\omega_k^2}{2}\bar{\lambda}_L)\bp-2b_k\eta_k\gamma_k\bp}^2\\
 &~~~+\frac{b_k}{2}(\|\x_{k+1}\|^2_\bk+\|\g_{k+1}^b\|^2)\\
 &~~~+(\frac{1+\eta_k}{2\eta_k\omega_k^2\underline{\lambda}_L^2}+\frac{b_k\gamma_k^2}{2\omega_k^2}+\frac{1}{4})\eta_k^2L_f^2\E_{\xi_k}[\Vert\bg_k^s\Vert^2]+n\eta_k^2\sigma^2,\addtag\label{eq:lyapunovofv3}\\
 &V_{4,k+1}\leq V_{4,k}-\frac{\eta_k}{4}\|\bg_k\|^2+\frac{\eta_kL_f^2}{2}\|\x_k\|_\bk^2-\frac{\eta_k}{4}\|\bg_k^b\|^2\\
 &~~~+\frac{\eta_k^2L_f}{2}\E_{\xi_k}\Vert\bg_k^s\Vert^2,\addtag\label{eq:lyapunovofv4}\\
 &V_{5,k+1}\leq (1-\frac{\varphi_2}{2}-\frac{\varphi_2^2}{2}+4\eta_k^2\gamma_k^2\bar{\lambda}_L^2r_0(1+\frac{2}{\varphi_2}))\Vert\x_k-\x_{k}^c\Vert^2\\
 &~~~+\Vert\x_k\Vert^2_{4\eta_k^2(1+\frac{2}{\varphi_2})(\gamma_k^2\bar{\lambda}_L^2+2L_f^2)\bk}\\
	&~~~+\Vert\vv_k+\frac{1}{\omega_k}\g_k^b\Vert^2_{4\eta_k^2(1+\frac{2}{\varphi_2})\omega_k^2\underline{\lambda}_L\bp}+(1+\frac{2}{\varphi_2})8n\eta_k^2\sigma^2,\addtag\label{eq:lyapunovofv5}
\end{align*}
where
\begin{align*}
&V_{1,k+1}=\frac{1}{2}\Vert\x_{k+1}\Vert_\bk^2\\
&V_{2,k+1}=\frac{1}{2}\Vert\vv_{k+1}+\frac{1}{\omega_{k+1}}\g_{k+1}^b\Vert^2_{\bp+\beta_1\bp}\\
&V_{3,k+1}=\x_{k+1}^\top\bk\bp(\vv_{k+1}+\frac{1}{\omega_{k+1}}\g_{k+1}^b)\\
    &V_{4,k+1}=n(f(\bar{x}_{k+1})-f^*),\\
    &V_{5,k+1}=\Vert\x_{k+1}-\x^c_{k+1}\Vert^2,\\
&b_k=\frac{1}{\omega_k}-\frac{1}{\omega_{k+1}},\\
&\varphi_2=\alpha_xr\varphi.
\end{align*}

\end{lemma}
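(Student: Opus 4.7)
The plan is to derive each of the five inequalities by substituting the vector-form updates
\begin{align*}
\x_{k+1} &= \x_k - \eta_k\bigl(\gamma_k\bl\hx_k + \omega_k\vv_k + \tilde{\g}_k\bigr),\\
\vv_{k+1} &= \vv_k + \eta_k\omega_k\bl\hx_k,\\
\x_{k+1}^c &= (1-\alpha_x)\x_k^c + \alpha_x\hx_k,
\end{align*}
into the corresponding $V_{i,k+1}$, expanding the squared norms, and then bounding the resulting cross terms via Young's inequality (Lemma~\ref{lemma:usefulinequalities}) together with $L_f$-smoothness and the compressor contraction (Assumption~\ref{as:compressor}). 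The conditional expectation $\E_{\xi_k}[\tilde{\g}_k]=\g_k$ from Assumption~\ref{as:unbiase} kills the linear noise cross terms, and Assumption~\ref{as:boundedvar} supplies the $n\sigma^2\eta_k^2$ residuals.

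For $V_{1,k+1}$ I would use $\bk\bl=\bl$ and $(\mathbf{1}_n^\top\otimes\bi_d)\bk=0$ to eliminate the averaged part, rewrite $\omega_k\vv_k=\omega_k\bigl(\vv_k+\tfrac{1}{\omega_k}\g_k^b\bigr)-\g_k^b$ and $\g_k=\g_k^b+(\g_k-\g_k^b)$, and distribute the cross terms into the claimed buckets via \eqref{eq:inequality1}; smoothness delivers $\|\g_k-\g_k^b\|^2\le L_f^2\|\x_k\|_\bk^2$, producing the $L_f^2\bk$-weighted contributions. For $V_{2,k+1}$ and $V_{3,k+1}$, the essential step is the telescoping identity
\[\frac{1}{\omega_{k+1}}\g_{k+1}^b = \frac{1}{\omega_k}\g_k^b + b_k\g_{k+1}^b + \frac{1}{\omega_k}(\g_{k+1}^b-\g_k^b),\]
where $b_k=\tfrac{1}{\omega_k}-\tfrac{1}{\omega_{k+1}}\ge 0$ by monotonicity of $\{\omega_k\}$. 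Substituting this into $\vv_{k+1}+\tfrac{1}{\omega_{k+1}}\g_{k+1}^b$ and expanding the weighted $\bp$-norm (resp.\ the inner product $\x_{k+1}^\top\bk\bp(\cdots)$) produces the $b_k$-scaled residuals and the explicit cross term $\hx_k^\top\bk(\g_{k+1}^b-\g_k^b)$, while smoothness bounds $\|\g_{k+1}^b-\g_k^b\|^2\le L_f^2\|\bar x_{k+1}-\bar x_k\|^2$; averaging the primal update then yields the $\eta_k^2 L_f^2\E\|\bg_k^s\|^2$ term.

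For $V_{4,k+1}$ I would apply the descent inequality \eqref{eqn:smooth1} to $f(\bar x_{k+1})$, observing that $\mathbf{1}_n^\top\bl=0$ together with $\bar v_0=0$ implies $\bar v_k=0$ for every $k$, so $\bar x_{k+1}-\bar x_k = -\eta_k\,\overline{\tilde{g}}_k$; Young's inequality on $\lf(\bar x_k)^\top\overline{\tilde{g}}_k$ and the bound $\|\tfrac{1}{n}\sum_i\lf_i(x_{i,k})-\lf(\bar x_k)\|^2\le\tfrac{L_f^2}{n}\|\x_k\|_\bk^2$ deliver \eqref{eq:lyapunovofv4}. For $V_{5,k+1}$ I would start from the identity
\[\x_{k+1}-\x_{k+1}^c = (1-\alpha_x r)(\x_k-\x_k^c) + \alpha_x r\!\left(\frac{C(\x_k-\x_k^c)}{r}-(\x_k-\x_k^c)\right) + (\x_{k+1}-\x_k),\]
take expectation with respect to the compressor, and apply \eqref{eq:propertyofcompressors} with $\varphi_2=\alpha_x r\varphi$; since $\alpha_x\in(0,1/r)$, after a Young split against $\x_{k+1}-\x_k$ the combined coefficient on $\|\x_k-\x_k^c\|^2$ becomes $1-\tfrac{\varphi_2}{2}-\tfrac{\varphi_2^2}{2}$, and expanding $\x_{k+1}-\x_k$ through the primal update yields the remaining $\|\x_k\|_\bk^2$, $\|\vv_k+\tfrac{1}{\omega_k}\g_k^b\|^2_\bp$, and $8n\eta_k^2\sigma^2$ contributions.

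The main obstacle is the scaling bookkeeping: every Young split $u^\top v\le\tfrac{s}{2}\|u\|^2+\tfrac{1}{2s}\|v\|^2$ must be tuned so that when the five bounds are later aggregated into a composite Lyapunov function in Theorem~\ref{theo:convergence1}, the positive residuals in $\|\x_k\|_\bk^2$, $\|\hx_k-\x_k\|^2$, $\|\vv_k+\tfrac{1}{\omega_k}\g_k^b\|_\bp^2$, and $\|\x_k-\x_k^c\|^2$ are dominated by the negative counterparts coming from the other $V_i$'s. Keeping the cross terms $\hx_k^\top\bk(\vv_k+\tfrac{1}{\omega_k}\g_k^b)$ and $\hx_k^\top\bk(\g_{k+1}^b-\g_k^b)$ un-split at this stage (rather than Young-bounded in isolation) is crucial, because they will cancel in the aggregated Lyapunov function and prematurely bounding them would destroy the eventual contraction.
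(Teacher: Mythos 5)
Your plan coincides with the paper's own proof in all essentials: each $V_{i,k+1}$ is expanded through the compact updates \eqref{eq:iterationxp3}--\eqref{citerationxc1}, the linear noise terms are removed by unbiasedness, the cross terms are tuned via Lemma~\ref{lemma:usefulinequalities} and smoothness, the time-varying $\omega_k$ is handled by exactly the paper's split of $\frac{1}{\omega_{k+1}}\g_{k+1}^b$ into a $\frac{1}{\omega_k}$-part plus a $b_k$-residual, and you correctly identify that the cross terms $\hx_k^\top\bk(\vv_k+\frac{1}{\omega_k}\g_k^b)$ must be left intact so they cancel upon aggregation. The only blemish is a sign slip in your telescoping identity (it should read $-b_k\g_{k+1}^b$, since $\frac{1}{\omega_{k+1}}=\frac{1}{\omega_k}-b_k$), which is harmless because that term is ultimately absorbed into the squared bound $\frac{1}{2}(b_k+b_k^2)\Vert\g_{k+1}^b\Vert^2$ exactly as in the paper.
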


For simplicity of the proof, we first provide some useful properties and inequalities. The update equations~\eqref{eq:iterationxp2}, \eqref{eq:iterationv2}, \eqref{citerationx}, and~\eqref{citerationxc} can be rewritten as the following compact form
\begin{align*}
	&~\x_{k+1}=\x_{k}-\eta_k(\gamma_k \bl\hx_{k}+\omega_k  \vv_{k}+\mathbf{g}^s_k),\addtag\label{eq:iterationxp3}\\
	&~\vv_{k+1}=\vv_{k}+\eta_k\omega_k \bl\hx_{k},\addtag\label{eq:iterationv3}\\
	&~\hx_{k}=\x_{k}^c+C(\x_{k}-\x_{k}^c),\addtag\label{citerationx1}\\
	&~\x_{k+1}^c=(1-\alpha_x)\x_{k}^c+\alpha_x\hx_{k},\addtag\label{citerationxc1}
	\end{align*}
 where $\x_k\triangleq[x_{1,k}^\top,\dots,x_{n,k}\top]^\top\in\R^{nd} $, $\mathbf{g}^s_k\triangleq[\tilde{\nabla} f_{1,k})^\top,\dots,\tilde{\nabla} f_{n,k})^\top]\in\R^{nd}$, $\vv_k\triangleq[v_{1,k}^\top,\dots,v_{n,k}^\top]^\top\in\R^{nd}$, $\hx_k\triangleq\left[\hat{x}_{1,k}^\top,\dots,\hat{x}_{n,k}^\top\right]^\top\in\R^{nd}$, and $\x_k^c\triangleq[{x^c}_{1,k}^\top,\dots,{x^c}_{n,k}\top]^\top\in\R^{nd}$.
From~\eqref{eq:iterationv3}, the property of Laplacian matrix, and the fact that $\sum_{i=1}^n v_{i,0}=\mathbf{0}_d$, we have 
   \begin{align}\label{eq:propertyofv}
	\bv_{k+1}=\mathbf{0}.
   \end{align}
Then from~\eqref{eq:iterationxp3} and~\eqref{eq:propertyofv}, one obtains that
\begin{align}\label{eq:propertyofbx}
	\bx_{k+1}=\bx_k-\eta_k\bg_k^s.
\end{align}
From Assumption~\ref{as:smooth}, one obtains that
\begin{align}\label{eq:propertyofbg1}
	\Vert\g^b_k-\g_k\Vert^2\leq L_f^2\Vert\bx_k-\x_k\Vert^2\leq L_f^2\Vert\x_k\Vert^2_\bk.
\end{align}
Furthermore, we have following useful equations
\begin{align*}
	&\Vert\bg^b_k-\bg_k\Vert^2=\Vert\bh(\g^b_k-\g_k)\Vert^2\leq L_f^2\Vert\x_k\Vert^2_\bk,\addtag\label{eq:propertyofbg2}\\
	&\Vert\g^b_{k+1}-\g^b_k\Vert^2\leq L_f^2\Vert\bx_{k+1}-\bx_k\Vert^2\\
	&~~~~~~~~~~~~~~~~~\leq \eta_k^2L_f^2\Vert\bg_k^s\Vert^2\addtag\label{eq:propertyofbg3},
\end{align*}
where the first inequality comes from~\eqref{eq:propertyofbg1} and $\lb_\bh=1$; the last inequality comes from Assumption~\ref{as:smooth} and~\eqref{eq:iterationxp3}.
From Assumption~\ref{as:independent}--\ref{as:boundedvar}, we have
\begin{align*}
    &\E_{\xi_k}[\g^s_k]=\g_k,\addtag\label{eq:propertyofsg1}\\
    &\E_{\xi_k}[\Vert\g^s_k-\g_k\Vert^2]\leq n\sigma^2,\addtag\label{eq:propertyofsg2}\\
    &\E_{\xi_k}[\bg^s_k]=\E_{\xi_k}[\bh\g^s_k]=\bh\E_{\xi_k}[\g^s_k]=\bg_k.\addtag\label{eq:propertyofsg4}
\end{align*}
Combining~\eqref{eq:inequality2}~\eqref{eq:propertyofbg1}, and~\eqref{eq:propertyofsg2}, we have 
\begin{align*}
    \E_{\xi_k}[\Vert\g^s_k-\g_k^b\Vert^2]&\leq 2\E_{\xi_k}[\Vert\g^s_k-\g_k\Vert^2]+2\Vert\g^b_k-\g_k\Vert^2\\
    &\leq 2L_f^2\Vert\x_k\Vert^2_\bk+2n\sigma^2.\addtag\label{eq:propertyofsg3}
\end{align*}

\textbf{(i)} This part shows the upper bound of $V_{1,k+1}$
    \begin{align*}
\E_{\xi_k}[\frac{1}{2}\Vert&\x_{k+1}\Vert_\bk^2]=\E_{\xi_k}[\frac{1}{2}\Vert\x_{k}-\eta_k(\gamma_k \bl\hx_{k}+\omega_k  \vv_{k}+\g^s_k)\Vert_\bk^2]\\
& =\frac{1}{2}\left\|\mathbf{x}_k\right\|_{\mathbf{K}}^2-\eta_k \gamma_k \mathbf{x}_k^{\top} \mathbf{L} \hat{\mathbf{x}}_k+\left\|\hat{\mathbf{x}}_k\right\|_{\frac{\eta_k^2 \gamma_k^2}{2} \mathbf{L}^2}^2 \\
&~~~-\eta_k \omega_k\left(\mathbf{x}_k^{\top}-\eta_k \gamma_k \hat{\mathbf{x}}_k^{\top} \mathbf{L}\right) \mathbf{K}\left(\mathbf{v}_k+\frac{1}{\omega_k} \mathbf{g}_k\right)\\
&~~~+\E_{\xi_k}\left\|\mathbf{v}_k+\frac{1}{\omega_k} \mathbf{g}_k^s\right\|_{\frac{\eta_k^2 \omega_k^2}{2} \mathbf{K}}^2 \\
& =\frac{1}{2}\left\|\mathbf{x}_k\right\|_{\mathbf{K}}^2-\eta_k \gamma_k \mathbf{x}_k^{\top} \mathbf{L}\left(\mathbf{x}_k+\hat{\mathbf{x}}_k-\mathbf{x}_k\right)+\left\|\hat{\mathbf{x}}_k\right\|_{\frac{\eta_k^2 \gamma_k^2}{2}\mathbf{L}^2}^2 \\
&~~~-\eta_k \omega_k\left(\mathbf{x}_k^{\top}-\eta_k \gamma_k \hat{\mathbf{x}}_k^{\top} \mathbf{L}\right) \mathbf{K}\bigg(\mathbf{v}_k+\frac{1}{\omega_k} \g^b_k\\
&~~~+\frac{1}{\omega_k} \mathbf{g}_k-\frac{1}{\omega_k} \g^b_k\bigg) \\
& +\E_{\xi_k}\left\|\mathbf{v}_k+\frac{1}{\omega_k} \g^b_k+\frac{1}{\omega_k} \mathbf{g}_k^s-\frac{1}{\omega_k} \g^b_k\right\|_{\frac{\eta_k^2 \omega_k^2}{2} \mathbf{K}}^2 \\
& \leq \frac{1}{2}\left\|\mathbf{x}_k\right\|_{\mathbf{K}}^2-\left\|\mathbf{x}_k\right\|_{\eta_k\gamma_k\mathbf{L}}^2+\left\|\mathbf{x}_k\right\|_{\frac{\eta_k\gamma_k}{2} \mathbf{L}}^2\\
&~~~+\left\|\hat{\mathbf{x}}_k-\mathbf{x}_k\right\|_{\frac{\eta_k\gamma_k}{2} \mathbf{L}}^2 +\left\|\hat{\mathbf{x}}_k\right\|_{\frac{\eta_k^2 \gamma_k^2}{2} L^2}^2 \\
&~~~-\eta_k \omega_k \mathbf{x}_k^{\top} \mathbf{K}\left(\mathbf{v}_k+\frac{1}{\omega_k} \g^b_k\right)+\frac{\eta_k}{2}\left\|\mathbf{x}_k\right\|_{\mathbf{K}}^2 \\
&~~~+\frac{\eta_k}{2}\left\|\mathbf{g}_k-\g^b_k\right\|^2+\left\|\hat{\mathbf{x}}_k\right\|_{\frac{\eta_k^2 \gamma_k^2}{2} \mathbf{L}^2}^2\\
&~~~+\frac{\eta_k^2 \omega_k^2}{2}\left\|\mathbf{v}_k+\frac{1}{\omega_k} \g^b_k\right\|^2 +\left\|\hat{\mathbf{x}}_k\right\|_{\frac{\eta_k^2 \gamma_k^2}{2} \mathbf{L}^2}^2\\
& ~~~+\frac{\eta_k^2}{2}\left\|\mathbf{g}_k-\g^b_k\right\|^2  +\eta_k^2 \omega_k^2\left\|\mathbf{v}_k+\frac{1}{\omega_k} \g^b_k\right\|^2\\
&~~~+\eta_k^2\E_{\xi_k}\left\|\mathbf{g}_k^s-\g^b_k\right\|^2 \\
& =\frac{1}{2}\left\|\mathbf{x}_k\right\|_{\mathbf{K}}^2-\left\|\mathbf{x}_k\right\|_{\frac{\eta_k \gamma_k}{2} \mathbf{L}-\frac{\eta_k}{2} \mathbf{K}}^2+\left\|\hat{\mathbf{x}}_k\right\|_{\frac{3 \eta_k^2 \gamma_k^2}{2} \mathbf{L}^2}^2 \\
& +\frac{\eta_k}{2}(1+ \eta_k)\left\|\mathbf{g}_k-\g^b_k\right\|^2+\left\|\hat{\mathbf{x}}_k-\mathbf{x}_k\right\|_{\frac{\eta_k\gamma_k}{2}\bl}^2 \\
& -\eta_k \omega_k\left(\hat{\mathbf{x}}_k+\mathbf{x}_k-\hat{\mathbf{x}}_k\right)^{\top} \mathbf{K}\left(\mathbf{v}_k+\frac{1}{\omega_k} \g^b_k\right)\\
&~~~+\frac{3 \eta_k^2 \omega_k^2}{2}\left\|\mathbf{v}_k+\frac{1}{\omega_k} \g^b_k\right\|^2+\eta_k^2\E_{\xi_k}\left\|\mathbf{g}_k^s-\g^b_k\right\|^2 \\
& \leq \frac{1}{2}\left\|\mathbf{x}_k\right\|_{\mathbf{K}}^2-\left\|\mathbf{x}_k\right\|_{\frac{\eta_k \gamma_k}{2} \mathbf{L}-\frac{\eta_k}{2} \mathbf{K}}^2+\left\|\hat{\mathbf{x}}_k\right\|_{\frac{3\eta_k^2\gamma_k^2}{2} \mathbf{L}^2}^2 \\
&~~~+\frac{\eta_k}{2}(1+\eta_k)\left\|\mathbf{g}_k-\g^b_k\right\|^2\\
&~~~+\left\|\hx_k-\x_k\right\|_{\frac{\eta_k}{2}(\gamma_k \mathbf{L}+2 \omega_k \bar{\lambda}_L \mathbf{K})}^2 \\
&~~~-\eta_k \omega_k \hat{\mathbf{x}}_k^{\top} \mathbf{K}\left(\mathbf{v}_k+\frac{1}{\omega_k} \g^b_k\right)+\eta_k^2\E_{\xi_k}\left\|\mathbf{g}_k^s-\g^b_k\right\|^2\\
&~~~+\frac{6 \eta_k^2 \omega_k^2+\eta_k \omega_k \bar{\lambda}_L^{-1}}{4}\left\|\mathbf{v}_k+\frac{1}{\omega_k} \g^b_k\right\|^2,\\
&\leq  \frac{1}{2}\left\|\mathbf{x}_k\right\|_{\mathbf{K}}^2-\left\|\mathbf{x}_k\right\|_{\frac{\eta_k \gamma_k}{2} \mathbf{L}-\frac{\eta_k}{2} \mathbf{K}-\frac{\eta_k}{2}(1+5 \eta_k) L_f^2 \boldsymbol{K}}^2\\
&~~~+\left\|\hat{\mathbf{x}}_k\right\|_{\frac{3\eta_k^2 \gamma_k^2}{2} \mathbf{L}^2}^2 +\frac{\eta}{2}(\gamma+2 \omega)\bar{\lambda}_L\left\|\hat{\mathbf{x}}_k-\mathbf{x}_k\right\|^2 \\
&~~~ -\eta_k \omega_k \hat{\mathbf{x}}_k^{\top} \mathbf{K}\left(\mathbf{v}_k+\frac{1}{\omega_k} \g^b_k\right)\\
&~~~+\frac{6 \eta_k^2 \omega_k^2 \bar{\lambda}_L+\eta_k \omega_k}{4}\left\|\mathbf{v}_k+\frac{1}{\omega_k} \g^b_k\right\|^2_\bp+2n\sigma^2\eta_k^2,\addtag\label{eq:upperboundofx1}
\end{align*}
where the first and second equalities comes from~\eqref{eq:iterationxp3},~\eqref{eq:propertyofp1} and~\eqref{eq:propertyofsg1}; the first inequality come from~\eqref{eq:inequality1} and~\eqref{eq:propertyofk}; the second inequality comes from~\eqref{eq:inequality1} and $\lb_\bk=1$,; the last inequality comes from~\eqref{eq:propertyofk1},~\eqref{eq:propertyofp1},~\eqref{eq:propertyofbg1}, and~\eqref{eq:propertyofsg3}. 

\textbf{(ii)} This part shows the upper bound of $V_{2,k+1}$
From the sequence $\{\omega_k\}$ in non-decreasing and~\eqref{eq:inequality1}, one obtains that
\begin{align*}
    V_{2,k+1}&=\frac{1}{2}\Vert\vv_{k+1}+\frac{1}{\omega_{k+1}}\g_{k+1}^b\Vert^2_{\bp+\beta_1\bp}\\
    &=\frac{1}{2}\Vert\vv_{k+1}+\frac{1}{\omega_{k}}\g_{k+1}^b+(\frac{1}{\omega_{k+1}}-\frac{1}{\omega_{k}})\g_{k+1}^b\Vert^2_{\bp+\beta_1\bp}\\
    &\leq\frac{1}{2}(1+b_k)\Vert\vv_{k+1}+\frac{1}{\omega_{k}}\g_{k+1}^b\Vert^2_{\bp+\beta_1\bp}\\
    &~~~+\frac{1}{2}(b_k+b_k^2)\Vert\g_{k+1}^b\Vert^2_{\bp+\beta_1\bp}.\addtag\label{eq:propertyofv21}
\end{align*}
With respect to $\Vert\vv_{k+1}+\frac{1}{\omega_{k}}\g_{k+1}^b\Vert^2_{\bp+\beta_1\bp}$, we have
\begin{align*}
	\frac{1}{2}\Vert&\vv_{k+1}+\frac{1}{\omega_k}\g_{k+1}^b\Vert^2_{\bp+\beta_1\bp}\\
	&=\frac{1}{2}\Vert\vv_k\!+\!\frac{1}{\omega_k}\g_k^b+\!\eta_k\omega_k\bl\hx_k\!+\!\frac{1}{\omega_k}(\g_{k+1}^b-\g_k^b)\Vert^2_{\bp+\beta_1\bp}\\
	&= \frac{1}{2}\Vert\vv_k+\frac{1}{\omega_k}\g_k^b\Vert^2_{\bp+\beta_1\bp}\\
 &~~~+\eta_k\omega_k(1+\beta_1)\hx_k^\top\bk\left(\vv_k+\frac{1}{\omega_k}\g_k^b\right)\\
  &~~~+\Vert\hx_k\Vert_{\frac{\eta_k^2\omega_k^2}{2}(1+\beta_1)\bl}^2+\frac{1}{2\omega_k^2}\Vert \g_{k+1}^b-\g_k^b\Vert^2_{\bp+\beta_1\bp}\\
	&~~~+\frac{1}{\omega_k}(\g_{k+1}^b-\g_k^b)^\top(\bp+\beta_1\bp)\left(\vv_k+\frac{1}{\omega_k}\g_k^b\right)\\
	&~~~+\eta_k\hx_k^\top(\bk+\beta_1\bk)(\g_{k+1}^b-\g_k^b)\\
	&\leq V_{2,k}+\eta_k\omega_k(1+\beta_1)\hx_k^\top\bk\left(\vv_k+\frac{1}{\omega_k}\g_k^b\right)\\
 &~~~+\Vert\hx_k\Vert_{\frac{\eta_k^2\omega_k^2}{2}(1+\beta_1)\bl}^2+\frac{1}{2\omega_k^2}\Vert \g_{k+1}^b-\g_k^b\Vert^2_{\bp+\beta_1\bp}\\
	&~~~+\Vert\vv_k+\frac{1}{\omega_k}\g_k^b\Vert^2_{\frac{\eta_k}{4}\bp}+\Vert\g_{k+1}^b-\g_k^b\Vert^2_{\frac{(1+\beta_1)^2}{\eta_k\omega_k^2}\bp}\\
	&~~~+\Vert\hx_k\Vert_{\frac{\eta_k^2}{2}\bk}^2\!+\!\frac{1}{2}\Vert\g_{k+1}^b-\g_k^b\Vert^2+\eta_k\beta_1\hx_k^\top\bk(\g_{k+1}^b-\g_k^b)\\	&=V_{2,k}+\eta_k\omega_k(1+\beta_1)\hx_k^\top\bk\left(\vv_k+\frac{1}{\omega_k}\g_k^b\right)\\	&~~~+\Vert\hx_k\Vert^2_{\frac{\eta_k^2\omega_k^2}{2}(1+\beta_1)\bl+\frac{\eta_k^2}{2}\bk}+\Vert\vv_k+\frac{1}{\omega_k}\g_k^b\Vert^2_{\frac{\eta_k}{4}\bp}\\
	&~~~+\Vert\g_{k+1}^b-\g_k^b\Vert^2_{(\frac{(1+\beta_1)^2}{\eta_k\omega_k^2}+\frac{1+\beta_1}{2\omega_k^2})\bp}+\frac{1}{2}\Vert\g_{k+1}^b-\g_k^b\Vert^2\\	&~~~+\eta_k\beta_1\hx_k^\top\bk(\g_{k+1}^b-\g_k^b)\\
 &\leq V_{2,k}+\eta_k\omega_k(1+\beta_1)\hx_k^\top\bk\left(\vv_k+\frac{1}{\omega_k}\g_k^b\right)\\	&~~~+\Vert\hx_k\Vert^2_{\frac{\eta_k^2\omega_k^2}{2}(1+\beta_1)\bl+\frac{\eta_k^2}{2}\bk}+\Vert\vv_k+\frac{1}{\omega_k}\g_k^b\Vert^2_{\frac{\eta_k}{4}\bp}\\
	&~~~+\left(\left(\frac{(1+\beta_1)^2}{\eta_k\omega_k^2}+\frac{1+\beta_1}{2\omega_k^2}\right)\frac{1}{\underline{\lambda}_L}+\frac{1}{2}\right)\eta_k^2L_f^2\E_{\xi_k}[\Vert\bg_k^s\Vert^2]\\	&~~~+\eta_k\beta_1\hx_k^\top\bk(\g_{k+1}^b-\g_k^b),\addtag\label{eq:propertyofv22}
\end{align*}
where $\tilde{b}_1=(\frac{1}{\omega_k^2}+\frac{1}{2\eta_k\omega_k})(1+\frac{\gamma_k}{\omega_k})\frac{1}{\underline{\lambda}_L}+\frac{1}{2}$; the first equality comes from~\eqref{eq:iterationv3}; the second equality comes from~\eqref{eq:propertyofk} and~\eqref{eq:propertyofp}; the first inequality comes from~\eqref{eq:inequality1}; the last inequality comes from~\eqref{eq:propertyofp1} and~\eqref{eq:propertyofbg3}. Combining~\eqref{eq:propertyofp1},~\eqref{eq:propertyofv21} and~\eqref{eq:propertyofv22}, we have
\begin{align*}
        V_{2,k+1}&\leq V_{2,k}+\left(1+b_k\right)\eta_k\omega_k(1+\beta_1)\hx_k^\top\bk\left(\vv_k+\frac{1}{\omega_k}\g_k^b\right) \\
&+\|\hx_k\|_{\left(1+b_k\right) \left(\frac{\eta_k^2\omega_k}{2}\left(\omega_k+\gamma_k\right) \bl+\frac{\eta_k^2}{2}\bk\right)}\\
&~~~+\frac{1}{2}\left(b_k+b_k\beta_1+\frac{\eta_k}{2}+\frac{b_k\eta_k}{2}\right)\left\|\vv_k+\frac{1}{\omega_k} \g_k^b\right\|_\bp^2\\
&~~~+(1+b_k)\left(\left(\frac{(1+\beta_1)^2}{\eta_k\omega_k^2}+\frac{1+\beta_1}{2\omega_k^2}\right)\frac{1}{\underline{\lambda}_L}+\frac{1}{2}\right)\\
&~~~\eta_k^2L_f^2\E_{\xi_k}[\Vert\bg_k^s\Vert^2]\\
&~~~+(1+b_k)\eta_k\beta_1\hx_k^\top\bk(\g_{k+1}^b-\g_k^b)\\
&~~~+\frac{1}{2\underline{\lambda}_L}(b_k+b_k^2)(1+\beta_1)\Vert\g_{k+1}^b\Vert^2.
\end{align*}

\textbf{(iii)} This part shows the upper bound of $V_{3,k+1}$. Similar to~\eqref{eq:propertyofv21}, one obtains that
\begin{align*}
V_{3,k+1}&=\x_{k+1}^\top\bk\bp(\vv_{k+1}+\frac{1}{\omega_{k+1}}\g_{k+1}^b)\\
&=\x_{k+1}^\top\bk\bp(\vv_{k+1}+\frac{1}{\omega_{k}}\g_{k+1}^b+(\frac{1}{\omega_{k+1}}-\frac{1}{\omega_{k}})\g_{k+1}^b)\\
&\leq \x_{k+1}^\top\bk\bp(\vv_{k+1}+\frac{1}{\omega_{k}}\g_{k+1}^b)+\frac{b_k}{2}(\|\x_{k+1}\|^2_\bk+\|\g_{k+1}^b\|^2).\addtag\label{eq:propertyofv31}
\end{align*}
Regards to the first term of~\eqref{eq:propertyofv31}, it  holds that
\begin{align*}	
\E_{\xi_k}[\x_{k+1}^\top&\bk\bp(\vv_{k+1}+\frac{1}{\omega_k}\g_{k+1}^b)]\\
	&=\E_{\xi_k}[(\x_k-\eta_k(\gamma_k\bl\hx_k+\omega_k\vv_k+\g_k^b+\g_k^s-\g_k^b))^\top\\
	&~~~\bk\bp(\vv_k +\frac{1}{\omega_k}\g_k^b+\eta_k\omega_k\bl\hx_k+\frac{1}{\omega_k}(\g_{k+1}^b-\g_k^b))]\\
	&=(\x_k^\top\bk\bp-\eta_k(\gamma_k+\eta_k\omega_k^2)\hx_k^\top\bk)(\vv_k+\frac{1}{\omega_k}\g_k^b)\\
	&~~~+\eta_k\omega_k\x_k^\top\bk\hx_k-\Vert\hx_k\Vert^2_{\eta_k^2\gamma_k\omega_k\bl}\\
 &~~~+\frac{1}{\omega_k}(\x_k^\top\bk\bp-\eta_k\gamma_k\hx_k^\top\bk)(\g_{k+1}^b-\g_k^b)\\
 &~~~-\eta_k(\omega_k\vv_k\!+\!\g_k^b+\!\g_k\!-\!\g_k^b\!-\bg_k)^\top\bp(\vv_k+\frac{1}{\omega_k}\g_k^b)\\
 &~~~-\eta_k(\vv_k+\frac{1}{\omega_k}\g_k^b)^\top\bp\bk(\g_{k+1}^b-\g_k^b)\\
	&~~~-\E_{\xi_k}[\eta_k(\g_k^s-\g_k^b)^\top(\eta_k\omega_k\bk\hx_k\\
 &~~~+\frac{1}{\omega_k}\bk\bp(\g_{k+1}^b-\g_k^b))]\\
	&\leq(\x_k^\top\bk\bp-\eta_k\gamma_k\hx_k^\top\bk)(\vv_k+\frac{1}{\omega_k}\g_k^b)+\Vert\hx_k\Vert_{\frac{\eta_k^2\omega_k^2}{2}\bk}^2\\
	&~~~+\Vert\vv_k+\frac{1}{\omega_k}\g_k^b\Vert_{\frac{\eta_k^2\omega_k^2}{2}}^2+\Vert\x_k\Vert_{\frac{\eta_k\omega_k}{4}\bk}^2\\
 &~~~+\Vert\hx_k\Vert_{\eta_k\omega_k(\bk-\eta_k\gamma_k\bl)}^2+\Vert\x_k\Vert_{\frac{\eta_k}{2}\bk}^2\\
	&~~~+\E_{\xi_k}[\Vert\g_{k+1}^b-\g_k^b\Vert^2_{\frac{1}{2\eta_k\omega_k^2}\bp^2}]\\
	&~~~-\frac{\eta_k\gamma_k}{\omega_k}\hx_k^\top\bk(\g_{k+1}^b-\g_k^b)-\Vert\vv_k+\frac{1}{\omega_k}\g_k^b\Vert_{\eta_k\omega_k\bp}^2\\
	&~~~+\frac{\eta_k}{4}\Vert\g_k-\g_k^b\Vert^2+\frac{\eta_k}{8}\Vert\bg_k\Vert^2\\
	&~~~+\Vert\vv_k+\frac{1}{\omega_k}\g_k^b\Vert_{3\eta_k\bp^2}^2+\Vert\vv_k+\frac{1}{\omega_k}\g_k^b\Vert_{\eta_k^2\bp^2}^2\\
	&~~~+\frac{1}{4}\Vert\g_{k+1}^b-\g_k^b\Vert^2+\frac{\eta_k^2}{2}\Vert\g_{k}-\g_k^b\Vert^2+\Vert\hx_k\Vert_{\frac{\eta_k^2\omega_k^2}{2}\bk}^2\\
	&~~~+\E_{\xi_k}[\frac{\eta_k^2}{2}\Vert\g_{k}^s-\g_k^b\Vert^2]+\E_{\xi_k}[\Vert\g_{k+1}^b-\g_k^b\Vert_{\frac{1}{2\omega_k^2}\bp^2}^2]\\
 &=(\x_k^\top\bk\bp-\eta_k\gamma_k\hx_k^\top\bk)(\vv_k+\frac{1}{\omega_k}\g_k^b)\\	&~~~+\Vert\x_k\Vert_{\frac{\eta_k(\omega_k+2)}{4}\bk}^2+\Vert\hx\Vert^2_{\eta_k\omega_k\bk+\eta_k^2(\omega_k^2\bk-\omega_k\gamma_k\bl)}\\
	&~~~+(\frac{\eta_k}{4}+\frac{\eta_k^2}{2})\Vert\g_{k}-\g_k^b\Vert^2\\
 &~~~-\frac{\eta_k\gamma_k}{\omega_k}\hx_k^\top\bk(\g_{k+1}^b-\g_k^b)+\frac{\eta_k}{8}\Vert\bg_k\Vert^2\\
 &~~~-\Vert\vv_k+\frac{1}{\omega_k}\g_k^b\Vert_{\eta_k\omega_k\bp-3\eta_k\bp^2-\eta_k^2\bp^2-\frac{\eta_k^2\omega_k^2}{2}\bi_{nd}}^2\\
 &~~~+\E_{\xi_k}[\frac{\eta_k^2}{2}\Vert\g_{k}^s-\g_k^b\Vert^2]\\
	&~~~+\E_{\xi_k}[\Vert\g_{k+1}^b-\g_k^b\Vert^2_{\frac{1+\eta_k}{2\eta_k\omega_k^2}\bp^2+\frac{1}{4}\bi_{nd}}]\\
	&\leq \x_k^\top\bk\bp(\vv_k+\frac{1}{\omega_k}\g_k^b)\\
	&~~~-(1+b_k)\eta_k\gamma_k\hx_k^\top\bk(\vv_k+\frac{1}{\omega_k}\g_k^b)\\ &~~~+\Vert\hx\Vert^2_{\eta_k\omega_k\bk+\eta_k^2(\omega_k^2\bk-\omega_k\gamma_k\bl)}\\	&~~~+\Vert\x_k\Vert_{\frac{\eta_k(\omega_k+2)}{4}\bk+(\frac{\eta_k}{4}+\frac{\eta_k^2}{2})L_f^2\bk}^2\\
  &~~~-(1+b_k)\frac{\eta_k\gamma_k}{\omega_k}\hx_k^\top\bk(\g_{k+1}^b-\g_k^b)+\frac{\eta_k}{8}\Vert\bg_k\Vert^2\\
 &~~~-\Vert\vv_k+\frac{1}{\omega_k}\g_k^b\Vert_{\eta_k(\omega_k-3\underline{\lambda}_L^{-1})\bp-\eta_k^2(\underline{\lambda}_L^{-1}-\frac{\omega_k^2}{2}\bar{\lambda}_L)\bp}^2\\
 &~~~+\E_{\xi_k}[\frac{\eta_k^2}{2}\Vert\g_{k}^s-\g_k^b\Vert^2]\\
	&~~~+\E_{\xi_k}[\Vert\g_{k+1}^b-\g_k^b\Vert^2_{\frac{1+\eta_k}{2\eta_k\omega_k^2\underline{\lambda}_\bl^2}\bi_{nd}+\frac{1}{4}\bi_{nd}}]\\
 &~~~+b_k\eta_k\gamma_k\hx_k^\top\bk(\vv_k+\frac{1}{\omega_k}\g_k^b)\\
	&~~~+b_k\frac{\eta_k\gamma_k}{\omega_k}\hx_k^\top\bk(\g_{k+1}^b-\g_k^b)\\
 &\leq \x_k^\top\bk\bp(\vv_k+\frac{1}{\omega_k}\g_k^b)\\
	&~~~-(1+b_k)\eta_k\gamma_k\hx_k^\top\bk(\vv_k+\frac{1}{\omega_k}\g_k^b)\\ &~~~+\Vert\hx\Vert^2_{\eta_k(\omega_k\bk+\frac{b_k\gamma_k}{8}\bl)+\eta_k^2(\omega_k^2\bk+\frac{b_k}{2}\bk-\omega_k\gamma_k\bl)}\\
 &~~~+\Vert\x_k\Vert_{(\frac{\eta_k(\omega_k+2)}{4})\bk+(\frac{\eta_k}{4}+\frac{3\eta_k^2}{2})L_f^2\bk}^2\\
  &~~~-(1+b_k)\frac{\eta_k\gamma_k}{\omega_k}\hx_k^\top\bk(\g_{k+1}^b-\g_k^b)+\frac{\eta_k}{8}\Vert\bg_k\Vert^2\\
 &~~~-\Vert\vv_k+\frac{1}{\omega_k}\g_k^b\Vert_{\eta_k(\omega_k-3\underline{\lambda}_L^{-1})\bp-\eta_k^2(\underline{\lambda}_L^{-1}-\frac{\omega_k^2}{2}\bar{\lambda}_L)\bp-2b_k\eta_k\gamma_k\bp}^2\\
 &~~~+(\frac{1+\eta_k}{2\eta_k\omega_k^2\underline{\lambda}_L^2}+\frac{b_k\gamma_k^2}{2\omega_k^2}+\frac{1}{4})\eta_k^2L_f^2\E_{\xi_k}[\Vert\bg_k^s\Vert^2]+n\eta_k^2\sigma^2,\addtag\label{eq:propertyofv32}
\end{align*}
where the first equality comes from~\eqref{eq:iterationxp3} and~\eqref{eq:iterationv3}; the second equality holds due to~\eqref{eq:propertyofk1},~\eqref{eq:propertyofp},~\eqref{eq:propertyofsg1},~\eqref{eq:propertyofsg2}, and the fact that $\bk=\bi-\bh$; the first inequality comes from~\eqref{eq:inequality1}; the second inequality holds due to~\eqref{eq:propertyofp1} and~\eqref{eq:propertyofbg1}; the last inequality holds due to~\eqref{eq:inequality1},~\eqref{eq:propertyofbg3}, and~\eqref{eq:propertyofsg3}. 

\textbf{(iv)} This part shows the upper bound of $V_{4,k+1}$.
\begin{align*}
	V_{4,k+1}&=n(f(\bar{x}_{k+1})-f^*)\\
 &=\tilde{f}(\bx_k)-nf^*+\tilde{f}(\bx_{k+1})-\tilde{f}(\bx_k)\\
	&\leq \tilde{f}(\bx_k)-nf^*-\eta_k(\bg_k^s)^\top\bg_k^b+\frac{\eta_k^2L_f}{2}\E_{\xi_k}\Vert\bg_k^s\Vert^2\\
	&=\tilde{f}(\bx_k)-nf^*-\frac{\eta_k}{2}\bg_k^\top(\bg_k^b+\bg_k-\bg_k)\\
	&~~~-\frac{\eta_k}{2}(\bg_k-\bg_k^b+\bg_k^b)^\top(\bg_k^b)+\frac{\eta_k^2L_f}{2}\E_{\xi_k}\Vert\bg_k^s\Vert^2\\
 &\leq\tilde{f}(\bx_k)-nf^*-\frac{\eta_k}{4}\|\bg_k\|^2+\frac{\eta_k}{2}\|\bg_k^b-\bg_k\|^2\\
	&~~~-\frac{\eta_k}{4}\|\bg_k^b\|^2+\frac{\eta_k^2L_f}{2}\E_{\xi_k}\Vert\bg_k^s\Vert^2\\
 &\leq\tilde{f}(\bx_k)-nf^*-\frac{\eta_k}{4}\|\bg_k\|^2+\frac{\eta_kL_f^2}{2}\|\x_k\|_\bk^2\\
	&~~~-\frac{\eta_k}{4}\|\bg_k^b\|^2+\frac{\eta_k^2L_f}{2}\E_{\xi_k}\Vert\bg_k^s\Vert^2,\addtag\label{eq:upperboundofax1}
\end{align*}
where the third equality holds due to~\eqref{eq:propertyofsg4}; the first inequality comes from~\eqref{eq:propertyofbx}, Assumption~\ref{as:smooth}, and the fact that $\bh=\bh\bh$; the second inequality holds due to~\eqref{eq:inequality1}; the last inequality comes from~\eqref{eq:inequality1},~\eqref{eq:inequality2}, and~\eqref{eq:propertyofbg2}.

\textbf{(v)} This part shows the upper bound of $V_{5,k+1}$
\begin{align*}
    V_{5,k+1}&=\|\x_{k+1}-\x_k^c\|^2\\
&=\Vert\x_{k+1}-\x_k+\x_k-\x_{k}^c-\alpha_xr\frac{C(\x_k-\x_{k}^c)}{r}\Vert^2\\
&\leq (1+s)(\alpha_xr(1-\varphi)+(1-\alpha_xr))\Vert\x_k-\x_{k}^c\Vert^2\\
&~~~+(1+\frac{1}{s})\Vert\x_{k+1}-\x_k\Vert^2\\
&\leq (1-\varphi_2-\frac{\varphi_2^2}{2})\Vert\x_k-\x_{k}^c\Vert^2\\
&~~~+(1+\frac{2}{\varphi_2})\Vert\x_{k+1}-\x_k\Vert^2,\addtag\label{eq:upperboundofc1}
\end{align*}
where the first equality comes from~\eqref{citerationx1} and~\eqref{citerationxc1}; the first inequality comes from~\eqref{eq:inequality1}; the second inequality follows by denoting $\varphi_2=\alpha_xr\varphi$, choosing $s=\frac{\varphi_2}{2}$, and $\alpha_xr<1$. We have
\begin{align*}
	\Vert\x_{k+1}&-\x_k\Vert^2\\
 &=\|\eta_k\gamma_k\bl\hx_k+\omega_k\vv_k+\g_k^s\|^2\\
	&=\eta_k^2\Vert(\gamma_k\bl(\hx_k-\x_k)+\gamma_k\bl\x_k+\omega_k\vv_k+\g_k^b\\
	&~~~+\g_k^s-\g_k^b)\Vert^2\\
	&\leq 4\eta_k^2(\Vert\gamma_k\bl(\hx_k-\x_k)\Vert^2+\Vert\omega_k\vv_k+\g_k^b\Vert^2\\
	&~~~+\Vert\gamma_k\bl\x_k\Vert^2+\Vert\g_k^s-\g_k^b\Vert^2)\\
	&\leq 4\eta_k^2(\gamma_k^2\bar{\lambda}_L^2r_0\Vert\x^c_k-\x_k\Vert^2\!+\!\Vert\vv_k\!+\!\frac{1}{\omega_k}\g_k^b\Vert^2_{\omega_k^2\underline{\lambda}_L\bp}\\
&~~~+\Vert\x_k\Vert^2_{(\gamma_k^2\bar{\lambda}_L^2+2L_f^2)\bk}+2n\sigma^2),\addtag\label{eq:upperboundofc2}
\end{align*}
where the first equality holds due to~\eqref{eq:iterationxp3}; the first inequality holds due to Jensen's inequality; the last inequality holds due to~\eqref{eq:propertyofcompressors1},~\eqref{eq:propertyofp1}, and~\eqref{eq:propertyofsg3}. Combining~\eqref{eq:upperboundofc1}--\eqref{eq:upperboundofc2}, one obtains that
\begin{align*}
	\Vert\x_{k+1}-\x^c_{k+1}\Vert^2
	&\leq (1-\frac{\varphi_2}{2}-\frac{\varphi_2^2}{2}\\
	&~~~+4\eta_k^2\gamma_k^2\bar{\lambda}_L^2r_0(1+\frac{2}{\varphi_2}))\Vert\x_k-\x_{k}^c\Vert^2\\
 &~~~+\Vert\x_k\Vert^2_{4\eta_k^2(1+\frac{2}{\varphi_2})(\gamma_k^2\bar{\lambda}_L^2+2L_f^2)\bk}\\
	&~~~+\Vert\vv_k+\frac{1}{\omega_k}\g_k^b\Vert^2_{4\eta_k^2(1+\frac{2}{\varphi_2})\omega_k^2\underline{\lambda}_L\bp}\\&~~~+(1+\frac{2}{\varphi_2})8n\eta_k^2\sigma^2,\addtag\label{eq:upperboundofc3}
\end{align*}

\subsection{The proof of Theorem~\ref{theo:convergence1}}
For simplicity of the proof, we also denote some notations and a useful auxiliary function
\begin{align*}
&\epsilon_1=\frac{\gamma}{2}\bl-(\frac{\omega+4}{4}+\frac{5}{4}L_f^2)\bk\\
&\epsilon_2=(12+\frac{16}{\varphi_2})L_f^2+(4+\frac{8}{\varphi_2}\gamma^2\bar{\lambda}_L^2)\\
&\epsilon_3=\frac{3\gamma^2}{2}\bl^2+(\frac{\omega^2-\omega\gamma}{2})\bl+(\frac{1}{2}+\omega^2)\bk\\
&\epsilon_4=\frac{3\omega-1}{4}-3\underline{\lambda}_L^{-1}\\
&\epsilon_5=\omega^2\bar{\lambda}_L+\underline{\lambda}_L^{-1}+(4+\frac{8}{\varphi_2})\omega^2\underline{\lambda}_L\\
&\epsilon_6=\frac{1}{8}-(\frac{2(1+\beta_1)}{\omega^2\underline{\lambda}_L}+\frac{1}{\omega^2\underline{\lambda}_L^2})L_f^2\\
&\epsilon_7=(\frac{1+\beta_1}{\omega^2\underline{\lambda}_L}+\frac{1}{\omega^2\underline{\lambda}_L^2}+\frac{3}{2})L_f^2+L_f\\
&\epsilon_8=(\frac{2(1+\beta_1)^2}{\eta\omega^2\underline{\lambda}_L}+\frac{1+\beta_1}{\omega^2\underline{\lambda}_L}+\frac{1+\eta}{\eta\omega^2\underline{\lambda}_L^2}+\frac{3}{2})L_f^2+L_f\\
&\epsilon_9=11+\frac{16}{\varphi_2}\\
&\epsilon_{10}=\frac{\varphi_2}{2}+\frac{\varphi_2^2}{2}\\
&\epsilon_{11}=\frac{1}{2}(\gamma+2 \omega)\bar{\lambda}_Lr_0+2\omega r_0\\
&\epsilon_{12}=\frac{(8+7\varphi_2)\gamma^2\bar{\lambda}_L^2r_0}{\varphi_2}+(1+2\omega^2)r_0\\
&\bar{\epsilon}_1=\frac{\gamma}{2}\underline{\lambda}_L-(\frac{\omega+4}{4}+\frac{5}{4}L_f^2)\\
&\bar{\epsilon}_3=\frac{1}{2}+\omega^2+\frac{3\gamma^2\bar{\lambda}_L^2}{2}\\
&\tilde{\epsilon}_1=\frac{\gamma}{2}\underline{\lambda}_L-(\frac{9\omega+4}{4}+\frac{5}{4}L_f^2)\\
&\tilde{\epsilon}_2=(12+\frac{16}{\varphi_2})L_f^2+(4+\frac{8}{\varphi_2}\gamma^2\bar{\lambda}_L^2)+1+2\omega^2+3\gamma^2\bar{\lambda}_L^2\\
&\beta_3=\max\{\frac{4+5L_f^2}{\beta_5},\frac{12\underline{\lambda}_L+1}{3},\sqrt{\beta_6},\frac{\beta_2}{\beta_4},4\beta_2 L_f\}\\
&\beta_4=\min\{\frac{\tilde{\epsilon}_1}{\tilde{\epsilon}_2},\frac{\epsilon_4}{\epsilon_5},\frac{\epsilon_6}{\epsilon_7},\frac{\sqrt{\epsilon_{11}^2+4\epsilon_{10}\epsilon_{12}}-\epsilon_{11}}{2\epsilon_{12}},1\}\\
&\beta_5>0\\
&\beta_6=(\frac{16(1+\beta_1)}{\underline{\lambda}_L}+\frac{8}{\underline{\lambda}_L^2})L_f^2\\
&\check{c}_1=\frac{\gamma \underline{\lambda}_L-\omega}{2 \gamma \underline{\lambda}_L}\\
&c_0=\max\{\frac{9+\beta_5}{2\underline{\lambda}_L},1\}\\
&c_1=(\frac{2(1+\beta_1)^2}{\beta_2\beta_3\underline{\lambda}_L}+\frac{1+\beta_1}{\beta_3^2\underline{\lambda}_L}+\frac{1}{\beta_2\beta_3\underline{\lambda}_L^2}++\frac{1}{\beta_3^2\underline{\lambda}_L^2}+\frac{3}{2})L_f^2+L_f\\
&c_2=\eta\tilde{\epsilon}_1-\eta^2\tilde{\epsilon}_2.
\end{align*}
\begin{lemma}
    Suppose Assumptions~\ref{as:strongconnected}--\ref{as:boundedvar} and~\ref{as:compressor} hold. If $\gamma_k=\gamma=\beta_1\omega$, $\beta_1>1$, $\omega_k=\omega$, $\alpha_x\in(0,\frac{1}{r})$, and $\eta_k=\eta$, it  holds that
    \begin{align*}
        \E_{\xi_k}[V_{k+1}]&\leq V_k-\|\x_k\|_{(\eta\tilde{\epsilon}_1-\eta^2\tilde{\epsilon}_2)\bk}-\Vert\vv_k+\frac{1}{\omega}\g_k^b\Vert^2_{\eta(\epsilon_4-\eta\epsilon_5)\bp}\\
 &~~~-\eta(\epsilon_6-\eta\epsilon_7)\|\bg_k\|-\frac{\eta}{4}\|\bg_k^b\|^2+\epsilon_8\sigma^2\eta^2+\epsilon_9n\sigma^2\eta^2\\
 &~~~-(\epsilon_{10}-\eta\epsilon_{11}-\eta^2\epsilon_{12})\Vert\x_k-\x_{k}^c\Vert^2,\addtag\label{eq:upperofV14}
    \end{align*}
    where $V_{k+1}=\sum_{i=1}^5 V_{i,k+1}$.
\end{lemma}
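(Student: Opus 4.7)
The plan is to add the five component bounds of Lemma~\ref{lemma:lyapunov} to form $V_{k+1}$, exploit the constant‑parameter choices to zero out numerous nuisance terms, verify that the remaining sign‑opposite cross terms cancel exactly, and then regroup everything into the advertised coefficients. First I would substitute $\omega_k=\omega$, $\gamma_k=\gamma=\beta_1\omega$, $\eta_k=\eta$, noting that $b_k=\tfrac{1}{\omega_k}-\tfrac{1}{\omega_{k+1}}=0$. This immediately kills every $b_k$- and $b_k^2$-term in the bounds for $V_{2,k+1}$ and $V_{3,k+1}$, including the awkward $\|\g_{k+1}^b\|^2$ remainders. Next I would collect the three cross terms of the form $\hat{\x}_k^\top \bk(\vv_k+\frac{1}{\omega}\g_k^b)$, which appear with coefficients $-\eta\omega$ in~\eqref{eq:lyapunovofv1}, $+\eta\omega(1+\beta_1)$ in~\eqref{eq:lyapunovofv2}, and $-\eta\gamma=-\eta\beta_1\omega$ in~\eqref{eq:lyapunovofv3}: their sum is exactly $0$. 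By the same accounting the two $\hat{\x}_k^\top\bk(\g_{k+1}^b-\g_k^b)$ terms (coefficients $+\eta\beta_1$ in $V_2$ and $-\eta\gamma/\omega=-\eta\beta_1$ in $V_3$) also cancel.

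Second, I would eliminate the remaining $\hat{\x}_k$ occurrences using the decomposition $\hat{\x}_k=\x_k+(\hat{\x}_k-\x_k)$ together with~\eqref{eq:inequality2} and the spectral bounds $\bl\preceq\bar\lambda_L\bk$ and $\bl^2\preceq\bar\lambda_L^2\bk$ (which hold because $\bl\bh=0$ and $\bk\preceq\bi_{nd}$). Taking then the expectation over the compressor and invoking~\eqref{eq:propertyofcompressors1} converts each $\|\hat{\x}_k-\x_k\|^2$ into at most $r_0\|\x_k-\x_k^c\|^2$, producing the $\eta$-linear and $\eta^2$-quadratic contributions to the $\|\x_k-\x_k^c\|^2$ coefficient that eventually assemble into $\epsilon_{11}$ and $\epsilon_{12}$. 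For the terms involving $\vv_k+\frac{1}{\omega}\g_k^b$ that are $\bk$-weighted (rather than $\bp$-weighted), I would use $\bk\preceq\bar\lambda_L\bp$ to convert them, which is why $\bar\lambda_L$ appears in $\epsilon_5$.

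Third, I would bound the stochastic‑gradient residuals $\|\g_k^s-\g_k^b\|^2$ and $\|\g_{k+1}^b-\g_k^b\|^2$ using~\eqref{eq:propertyofsg3} and~\eqref{eq:propertyofbg3}, noting that under $\E_{\xi_k}$ the term $\|\bg_k^s\|^2$ is controlled by $\|\bg_k\|^2+n\sigma^2$. After these substitutions, I would collect like terms: the $\|\x_k\|_\bk^2$ coefficients aggregate from $V_1$ (the $\tfrac{\eta\gamma}{2}\bl-\tfrac{\eta}{2}\bk-\cdots$ piece, lower bounded via $\bl\succeq\underline{\lambda}_L\bk$), from $V_3$ (the $\tfrac{\eta(\omega+2)}{4}\bk+\cdots$ piece), from $V_4$ (the $\tfrac{\eta L_f^2}{2}$ piece), plus the new pieces born from the splitting of $\hat{\x}_k$; these combine to exactly $\eta\tilde\epsilon_1-\eta^2\tilde\epsilon_2$. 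Similarly, the $\bp$-weighted coefficients combine to $\eta(\epsilon_4-\eta\epsilon_5)$, the $\|\bg_k\|^2$ coefficients to $\eta(\epsilon_6-\eta\epsilon_7)$, the $\|\bg_k^b\|^2$ coefficient to exactly $-\eta/4$ (this comes solely from $V_4$, as nothing else produces it), and the $\sigma^2$ noise terms separate into an $n\sigma^2\eta^2$ piece (from $V_1$, $V_3$, $V_4$, $V_5$, coefficient $\epsilon_9$) and a $\sigma^2\eta^2$ piece (from the $\|\g_{k+1}^b-\g_k^b\|^2$-driven terms of $V_2$ and $V_3$, coefficient $\epsilon_8$).

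The main obstacle is purely bookkeeping: verifying that after the cancellations, the spectral conversions $\bl\preceq\bar\lambda_L\bk$, $\bk\preceq\bar\lambda_L\bp$, and the compressor bound $\E_C\|\hat{\x}_k-\x_k\|^2\leq r_0\|\x_k-\x_k^c\|^2$, every accumulated coefficient matches exactly the constants $\tilde\epsilon_1,\tilde\epsilon_2,\epsilon_4,\dots,\epsilon_{12}$ as defined. No new analytical ideas are required beyond Lemma~\ref{lemma:usefulinequalities}, the spectral inequalities from Lemma 2 of the Supporting Lemmas, and the moment/compressor bounds from Assumptions~\ref{as:boundedvar} and~\ref{as:compressor}; the delicate part is to choose the free Young parameters consistently so that the $\bk$-weighted terms produced by splitting $\hat{\x}_k$ do not overwhelm the negative $\tfrac{\eta\gamma}{2}\bl$ contribution from $V_1$, which is what ultimately forces the condition $\beta_1>c_0$ to be satisfied.
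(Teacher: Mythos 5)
Your proposal is correct and follows essentially the same route as the paper's proof: substituting the constant parameters so that $b_k=0$, exploiting the exact cancellation of the $\hat{\x}_k^\top\bk(\vv_k+\frac{1}{\omega}\g_k^b)$ and $\hat{\x}_k^\top\bk(\g_{k+1}^b-\g_k^b)$ cross terms, splitting $\hat{\x}_k=\x_k+(\hat{\x}_k-\x_k)$ together with the compressor bound $\E_C\|\hat{\x}_k-\x_k\|^2\leq r_0\|\x_k-\x_k^c\|^2$, controlling $\E_{\xi_k}\|\bg_k^s\|^2$ by $2\|\bg_k\|^2+2\sigma^2$, and regrouping into the stated $\epsilon$-coefficients. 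The remaining work is indeed only the coefficient bookkeeping you describe.
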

\begin{proof}
   
We first consider the term $\E_{\xi_k}[\Vert\bg_k^s\Vert^2$
\begin{align*}
    \E_{\xi_k}[\Vert\bg_k^s\Vert^2]&= \E_{\xi_k}[\Vert\bg_k^s-\bg_k+\bg_k\Vert^2]\\
    &\leq 2\E_{\xi_k}[\Vert\bg_k^s-\bg_k\Vert^2]+2\|\bg_k\|^2\\
    &=\frac{2}{n}\E_{\xi_k}[\Vert \sum_{i=1}^n g_{i,k}^s-g_{i,k}\Vert^2]+2\|\bg_k\|^2\\
    &=\frac{2}{n}\sum_{i=1}^n \E_{\xi_k}[\Vert g_{i,k}^s-g_{i,k}\Vert^2]+2\|\bg_k\|^2\\
    &\leq 2\sigma^2+2\|\bg_k\|^2,\addtag\label{eq:upperofV11}
\end{align*}
where the first inequality holds due to~\eqref{eq:inequality2}; the last equality holds due to Assumptions~\ref{as:independent} and~\ref{as:unbiase}; the last inequality holds due to~\eqref{eq:propertyofsg2}. We then consider the term $\|\hx_k\|_\bk^2$
\begin{align*}
   \|\hx_k\|_\bk^2=\|\hx_k-\x_k+\x_k\|_\bk^2\leq 2\|\hx_k-\x_k\|^2+2\|\x_k\|_\bk^2.\addtag\label{eq:upperofV12}
\end{align*}

 Since $\gamma_k=\gamma=\beta_1\omega$, $\omega_k=\omega$, $\eta_k=\eta$, and~\eqref{eq:upperofV11}, from Lemma~\ref{lemma:lyapunov}, we have
    \begin{align*}
\E_{\xi_k}&[V_{k+1}]\\
&\leq V_k-\left\|\mathbf{x}_k\right\|_{\frac{\eta \gamma}{2} \mathbf{L}-\frac{\eta}{2} \mathbf{K}-\frac{\eta}{2}(1+5 \eta) L_f^2 \boldsymbol{K}}^2+\left\|\hat{\mathbf{x}}_k\right\|_{\frac{3\eta^2 \gamma^2}{2} \mathbf{L}^2}^2 \\
	&~~~+\frac{\eta}{2}(\gamma+2 \omega)\bar{\lambda}_Lr_0\left\|\hat{\mathbf{x}}_k-\mathbf{x}^c_k\right\|^2 \\
&~~~ +\frac{6 \eta^2 \omega^2 \bar{\lambda}_L+\eta \omega+\eta}{4}\left\|\mathbf{v}_k+\frac{1}{\omega} \g^b_k\right\|^2_\bp+2n\sigma^2\eta^2\\
	&~~~+\|\hx_k\|_{ \frac{\eta^2\omega}{2}\left(\omega+\gamma\right) \bl+\frac{\eta^2}{2}\bk}\\
&~~~+\left(\left(\frac{(1+\beta_1)^2}{\eta\omega^2}+\frac{1+\beta_1}{2\omega^2}\right)\frac{1}{\underline{\lambda}_L}+\frac{1}{2}\right)\\
	&~~~\eta^2L_f^2(2\sigma^2+2\|\bg_k\|^2)\\
&~~~+\Vert\hx\Vert^2_{\eta\omega\bk+\eta^2(\omega^2\bk-\omega\gamma\bl)}\\
	&~~~+\Vert\x_k\Vert_{(\frac{\eta(\omega+2)}{4})\bk+(\frac{\eta}{4}+\frac{3\eta^2}{2})L_f^2\bk}^2+\frac{\eta}{8}\Vert\bg_k\Vert^2\\
&~~~-\Vert\vv_k+\frac{1}{\omega}\g_k^b\Vert_{\eta(\omega-3\underline{\lambda}_L^{-1})\bp-\eta^2(\underline{\lambda}_L^{-1}-\frac{\omega^2}{2}\bar{\lambda}_L)\bp}^2\\
&~~~+(\frac{1+\eta}{2\eta\omega^2\underline{\lambda}_L^2}+\frac{1}{4})\eta^2L_f^2(2\sigma^2+2\|\bg_k\|^2)+n\eta^2\sigma^2\\
&~~~-\frac{\eta}{4}\|\bg_k\|^2+\frac{\eta L_f^2}{2}\|\x_k\|_\bk^2-\frac{\eta}{4}\|\bg_k^b\|^2\\
	&~~~+\eta^2L_f(\sigma^2+\|\bg_k\|^2)\\
&~~~+(-\frac{\varphi_2}{2}-\frac{\varphi_2^2}{2}+4\eta^2\gamma^2\bar{\lambda}_L^2r_0(1+\frac{2}{\varphi_2}))\Vert\x_k-\x_{k}^c\Vert^2\\
	&~~~+\Vert\x_k\Vert^2_{4\eta^2(1+\frac{2}{\varphi_2})(\gamma^2\bar{\lambda}_L^2+2L_f^2)\bk}\\
	&~~~+\Vert\vv_k+\frac{1}{\omega}\g_k^b\Vert^2_{4\eta^2(1+\frac{2}{\varphi_2})\omega^2\underline{\lambda}_L\bp}+(1+\frac{2}{\varphi_2})8n\eta^2\sigma^2\\
 &=V_k-\|\x_k\|_{\eta\epsilon_1-\eta^2\epsilon_2\bk}+\|\hx_k\|^2_{\eta\omega\bk+\eta^2\epsilon_3}\\
	&~~~-\Vert\vv_k+\frac{1}{\omega}\g_k^b\Vert^2_{\eta(\epsilon_4-\eta\epsilon_5)\bp}\\
 &~~~-\eta(\epsilon_6-\eta\epsilon_7)\|\bg_k\|-\frac{\eta}{4}\|\bg_k^b\|^2+\epsilon_8\sigma^2\eta^2\\
	&~~~+\epsilon_9n\sigma^2\eta^2\\
 &~~~+((-\frac{\varphi_2}{2}-\frac{\varphi_2^2}{2}+4\eta^2\gamma^2\bar{\lambda}_L^2r_0(1+\frac{2}{\varphi_2})\\
	&~~~+\frac{\eta}{2}(\gamma+2 \omega)\bar{\lambda}_Lr_0)\Vert\x_k-\x_{k}^c\Vert^2\\
 &\leq V_k-\|\x_k\|_{(\eta\bar{\epsilon}_1-\eta^2\epsilon_2)\bk}+\|\hx_k\|^2_{\eta\omega\bk+\eta^2\bar{\epsilon}_3\bk}\\
	&~~~-\Vert\vv_k+\frac{1}{\omega}\g_k^b\Vert^2_{\eta(\epsilon_4-\eta\epsilon_5)\bp}\\
 &~~~-\eta(\epsilon_6-\eta\epsilon_7)\|\bg_k\|-\frac{\eta}{4}\|\bg_k^b\|^2+\epsilon_8\sigma^2\eta^2\\
	&~~~+\epsilon_9n\sigma^2\eta^2\\
 &~~~+((-\frac{\varphi_2}{2}-\frac{\varphi_2^2}{2}+4\eta^2\gamma^2\bar{\lambda}_L^2r_0(1+\frac{2}{\varphi_2})\\
	&~~~+\frac{\eta}{2}(\gamma+2 \omega)\bar{\lambda}_Lr_0)\Vert\x_k-\x_{k}^c\Vert^2,\addtag\label{eq:upperofV13}
    \end{align*}
    where the first inequality holds since $\gamma_k=\gamma=\beta_1\omega$, $\omega_k=\omega$, $\eta_k=\eta$,~\eqref{eq:propertyofcompressors1},~\eqref{citerationx1},~\eqref{eq:upperofV11}, and Lemma~\ref{lemma:lyapunov}; the second inequality due to~\eqref{eq:propertyofk1} and $\beta_1>1$. Combining~\eqref{eq:propertyofcompressors1},~\eqref{eq:upperofV12} and~\eqref{eq:upperofV13}, we complete the proof.
\end{proof}

We then ready to prove Theorem~\ref{theo:convergence1}

\textbf{(i)} From $\gamma=\beta_1\omega$, $\beta_1>\frac{9+\beta_5}{2\underline{\lambda}_L}$, $\beta_5>0$, and $\omega>\beta_3\geq\frac{4+5L_f^2}{\beta_5}$ we have
\begin{align*}
    \tilde{\epsilon}_1&=\frac{\beta_1\omega}{2}\underline{\lambda}_L-(\frac{9\omega+4}{4}+\frac{5}{4}L_f^2)\\
    &>\frac{\beta_1\omega}{2}\underline{\lambda}_L-(\frac{(9+\beta_5)\omega}{4})\geq 0.
\end{align*}
Since $\beta_3>\frac{12\underline{\lambda}_L+1}{3}$, we have $\epsilon_4>0$.
From $\beta_3\geq\sqrt{\beta_6}$, one obtains that
\begin{align*}
    \epsilon_6=\frac{1}{8}-(\frac{2(1+\beta_1)}{\omega^2\underline{\lambda}_L}+\frac{1}{\omega^2\underline{\lambda}_L^2})L_f^2\geq 0
\end{align*}
From $\eta=\frac{\beta_2}{\omega}$ and $\beta_3\geq\frac{\beta_2}{\beta_4}$, it  holds that
$\eta\tilde{\epsilon}_1-\eta^2\tilde{\epsilon}_2$, $\eta(\epsilon_4-\eta\epsilon_5)$, $\eta(\epsilon_6-\eta\epsilon_7)$, and $\epsilon_{10}-\eta\epsilon_{11}-\eta^2\epsilon_{12}$ are positive.
From $\eta=\frac{\beta_2}{\omega}$ and $\omega>\beta_3$, we have
\begin{align*}
    \epsilon_8&=(\frac{2(1+\beta_1)^2}{\beta_2\omega\underline{\lambda}_L}+\frac{1+\beta_1}{\omega^2\underline{\lambda}_L}\\
	&~~~+\frac{1}{\beta_2\omega\underline{\lambda}_L^2}+\frac{1}{\omega^2\underline{\lambda}_L^2}+\frac{3}{2})L_f^2+L_f\leq c_1.\addtag\label{eq:upperofep8}
\end{align*}

\textbf{(ii)} From~\eqref{eq:upperofV11} and Lemma~
\ref{lemma:lyapunov}, we have
\begin{align*}
    \E_{\xi_k}[V_{4,k+1}]&\leq V_{4,k}-\frac{\eta}{4}\|\bg_k\|^2+\frac{\eta L_f^2}{2}\|\x_k\|_\bk^2-\frac{\eta}{4}\|\bg_k^b\|^2\\
	&~~~+\eta^2L_f(\sigma^2+\|\bg_k\|^2)\\
    &\leq V_{4,k}+\frac{\eta L_f^2}{2}\|\x_k\|_\bk^2-\frac{\eta}{4}\|\bg_k^b\|^2+\eta^2L_f\sigma^2,\addtag\label{eq:upperofv4}
\end{align*}
where the last inequality holds due to $\eta=\frac{\beta_2}{\omega}$ and $\omega>\beta_3\geq 4\beta_2 L_f$.

\textbf{(iii)} We first denote the following useful function
\begin{align*}
U_k=\Vert\x_{k}\Vert_\bk^2+\Vert\vv_{k}+\frac{1}{\omega}\g_{k}^b\Vert^2_{\bp}+\Vert\x_{k}-\x^c_{k}\Vert^2+n(f(\bar{x}_{k})-f^*).
\end{align*}
Then we have
\begin{align*}
    V_k&=\frac{1}{2}\Vert\x_{k+1}\Vert_\bk^2+\frac{1}{2}\Vert\vv_{k+1}+\frac{1}{\omega_{k+1}}\g_{k+1}^b\Vert^2_{\bp+\beta_1\bp}\\
	&~~~+\x_{k+1}^\top\bk\bp(\vv_{k+1}+\frac{1}{\omega}\g_{k+1}^b)\\
    &~~~+n(f(\bar{x}_{k+1})-f^*)+\Vert\x_{k+1}-\x^c_{k+1}\Vert^2\\
    &\geq\frac{1}{2}\Vert\x_{k+1}\Vert_\bk^2+\frac{1}{2}\Vert\vv_{k+1}+\frac{1}{\omega_{k+1}}\g_{k+1}^b\Vert^2_{\bp+\beta_1\bp}\\
	&~~~-\frac{\omega}{2\gamma\underline{\lambda}_L}\|\x_k\|_\bk^2-\frac{\gamma}{2\omega}\Vert\vv_{k}+\frac{1}{\omega}\g_{k}^b\Vert^2_{\bp}\\
    &~~~+n(f(\bar{x}_{k+1})-f^*)+\Vert\x_{k+1}-\x^c_{k+1}\Vert^2\\
    &\geq \check{c}_1U_k\geq 0,\addtag\label{eq:lowerofw}
\end{align*}
From~\eqref{eq:upperofV14} and~\eqref{eq:upperofep8}, it  holds that
\begin{align*}
    \E_{\xi_k}[V_{k+1}]\leq V_k-c_2\|\x_k\|_\bk^2-\frac{\beta_2}{4\omega}\|\bg_k^b\|^2+\frac{(c_1+\epsilon_9)\beta_2^2\sigma^2}{\omega^2}.\addtag\label{eq:upperofx1}
    \end{align*}
Then summing~\eqref{eq:upperofx1} over $k\in[0,T]$, we have 
\begin{align*}
\E[V_{k+1}]+&\sum_{k=0}^T\E[c_2\|\x_k\|_\bk^2+\frac{\beta_2}{4\omega}\|\bg_k^b\|^2]\\
	\leq& V_0+\frac{(T+1)(c_1+\epsilon_9)\beta_2^2\sigma^2}{\omega^2}.\addtag\label{eq:upperofx2}
\end{align*}
Combining~\eqref{eq:lowerofw} and~\eqref{eq:upperofx2}, we have
\begin{align*}
    \frac{1}{n(T+1)}&\sum_{k=0}^T\mathbb{E}\left[\frac{1}{n}\sum_{i=1}^n\Vert x_{i,k}-\bar{x}_k\Vert^2\right]  \\
    \leq&\frac{V_0}{nc_2(T+1)}+\frac{(c_1+\epsilon_9)\beta_2^2\sigma^2}{nc_2\omega^2}.\addtag\label{eq:upperofx3}
\end{align*}
Since $V_0=\mathcal{O}(n)$ and $\epsilon_9=\mathcal{O}(n)$, we have~\eqref{eq:theo11}.

Then summing~\eqref{eq:upperofv4} over $k\in[0,T]$, one obtains that
\begin{align*}
    \frac{1}{4}\sum_{k=0}^T\E[n\|\lf(\bar{x}_k)\|^2]&=\frac{1}{4}\sum_{k=0}^T\E[\|\bg_k^b\|^2]\\
   & \leq \frac{V_{4,0}}{\eta}+\frac{L_f^2}{2}\sum_{k=0}^T\E[\|\x_k\|^2_\bk]\\
	&~~~+(T+1)L_f^2\sigma^2\eta.\addtag\label{eq:upperofx4}
\end{align*}
From~\eqref{eq:upperofx3},~\eqref{eq:upperofx4} and $\eta=\beta_2/\omega$, we have
\begin{align*}
    \frac{1}{T}\sum_{k=0}^{T-1}\E[\|\lf(\bar{x}_k)\|^2]
   & \leq \frac{4\omega(f(\bar{x}_0)-f^*)}{\beta_2T}+\frac{4L_f^2\sigma^2\beta_2}{n\omega}\\
	&~~~+\mathcal{O}(\frac{1}{T})+\mathcal{O}(\frac{1}{\omega^2}).\addtag\label{eq:upperofx5}
\end{align*}
Then we complete the proof.
\section{The proof of Theorem~\ref{theo:convergence2}}\label{app-convergence2}
In this proof, in addition to the notations in Appendix~\ref{app-convergence1}, we also denote
\begin{align*}
    &\beta_8 = \max\{\frac{1}{2}+\beta_1,\frac{\gamma \underline{\lambda}_L+\omega}{2 \gamma \underline{\lambda}_L}\}\\
    &\beta_9=\eta\min\{\tilde{\epsilon}_1-\tilde{\epsilon}_2\eta,~\epsilon_4-\epsilon_5\eta,\frac{\nu}{2},\frac{\epsilon_{10}}{\eta}-\epsilon_{11}-\epsilon_{12}\eta\}\\
    &\bar{\beta}=\frac{\beta_9}{\eta\beta_8}\\
    &\check{c}_2=c_1+n\epsilon_9
\end{align*}
From $\beta_8\geq\frac{1}{2}+\frac{\gamma}{\omega}\geq\frac{3}{2}$ and $\beta_9<\epsilon_{10}=\frac{\varphi_2}{2}+\frac{\varphi_2^2}{2}<1$ since $\varphi_2<1$, we have
\begin{align*}
    \eta\bar{\beta}<\frac{2}{3}.
\end{align*}
Similar to~\eqref{eq:lowerofw}, we have
\begin{align*}
   V_k\leq \beta_8 U_k.\addtag\label{upperofV2}
\end{align*}
From the Assumptions~\ref{as:finite} and~\ref{as:PLcondition}, one obtains that
\begin{align*}
    \|\bg_k^b\|^2=n\|\lf(\bar{x}_k)\|^2\geq 2n\nu(f(\bar{x}_k)-f^*)=2\nu V_{4,k}.\addtag\label{upperofV21}
\end{align*}
From~\eqref{eq:upperofV14},~\eqref{eq:upperofep8},~\eqref{upperofV2}, and~\eqref{upperofV21}, we have
\begin{align*}
    \E[V_{k+1}]&\leq \E[V_k-\beta_9 U_k]+(c_1+n\epsilon_9)\sigma^2\eta^2\\
    &\leq \E[V_k-\frac{\beta_9}{\beta_8} V_k]+(c_1+n\epsilon_9)\sigma^2\eta^2\\
    &\leq(1-\eta\bar{\beta})^{k+1}V_0+(c_1+n\epsilon_9)\sigma^2\eta^2\sum_{m=0}^k(1-\eta\bar{\beta})^m\\
    &\leq (1-\eta\bar{\beta})^{k+1}V_0+\frac{(c_1+n\epsilon_9)\sigma^2\eta}{\bar{\beta}}.\addtag\label{upperofV22}
\end{align*}
From~\eqref{eq:lowerofw} and~\eqref{upperofV22}, we have~\eqref{eq:linearconverge}.

\section{The proof of Theorem~\ref{theo:convergence4}}\label{app-convergence4}
In this proof, in addition to the notations in Appendices~\ref{app-convergence1} and \ref{app-convergence2}, we also denote
\begin{align*}
   &\check{\epsilon}_1=\tilde{\epsilon}_1-\frac{5}{4} L_f^2\\
   &\check{\epsilon}_2=\tilde{\epsilon}_2+L_f^2\\
   &\check{\epsilon}_6=\frac{1}{8}-(\frac{1+\beta_1}{\omega^2\underline{\lambda}_L}+\frac{1}{2\omega^2\underline{\lambda}_L^2})L_f^2\\
    &\check{\epsilon}_7=(\frac{1+\beta_1}{2\omega^2\underline{\lambda}_L}+\frac{1}{2\omega^2\underline{\lambda}_L^2}+\frac{3}{4})L_f^2+\frac{L_f}{2}\\
    &\check{\beta}_3=\max\{\frac{4+10L_f^2}{\beta_5},\frac{12\underline{\lambda}_L+1}{3},\sqrt{\beta_6},\frac{\beta_2}{\check{\beta}_4},4\beta_2 L_f\}\\
    &\check{\beta}_4=\min\{\frac{\check{\epsilon}_1}{\check{\epsilon}_2},\frac{\epsilon_4}{\epsilon_5},\frac{\check{\epsilon}_6}{\check{\epsilon}_7},\frac{\sqrt{\epsilon_{11}^2+4\epsilon_{10}\epsilon_{12}}-\epsilon_{11}}{2\epsilon_{12}}\}\\
    &\check{c}_3=3+13\eta+\frac{16\eta}{\varphi_2}
\end{align*}
We have \eqref{eq:lyapunovofv2} and~\eqref{eq:lyapunovofv5} still hold even without the Assumption~\ref{as:unbiase}. Similarly, the way to obtain~\eqref{eq:lyapunovofv1},~\eqref{eq:lyapunovofv3},~\eqref{eq:lyapunovofv4}, it holds that
\begin{align*}
    &\E_{\xi_k}[V_{1,k+1}]\leq V_{1,k}-\left\|\mathbf{x}_k\right\|_{\frac{\eta_k \gamma_k}{2} \mathbf{L}-\frac{\eta_k}{2} \mathbf{K}-\eta_k(1+3 \eta_k) L_f^2 \boldsymbol{K}}^2\\
    &~~~+\left\|\hat{\mathbf{x}}_k\right\|_{\frac{3\eta_k^2 \gamma_k^2}{2} \mathbf{L}^2}^2+ \frac{\eta}{2}(\gamma+2 \omega)\bar{\lambda}_L\left\|\hat{\mathbf{x}}_k-\mathbf{x}_k\right\|^2\\
&~~~ -\eta_k \omega_k \hat{\mathbf{x}}_k^{\top} \mathbf{K}\left(\mathbf{v}_k+\frac{1}{\omega_k} \g^b_k\right)\\
&~~~+\frac{6 \eta_k^2 \omega_k^2 \bar{\lambda}_L+\eta_k \omega_k}{4}\left\|\mathbf{v}_k+\frac{1}{\omega_k} \g^b_k\right\|^2_\bp+\eta_k(1+3 \eta_k)n\sigma^2,\addtag\label{eq:lyapunovofvv1}\\
   &V_{3,k+1}\leq V_{3,k}-(1+b_k)\eta_k\gamma_k\hx_k^\top\bk(\vv_k+\frac{1}{\omega_k}\g_k^b)\\
   &~~~+\Vert\hx\Vert^2_{\eta_k(\omega_k\bk+\frac{b_k\gamma_k}{8}\bl)+\eta_k^2(\omega_k^2\bk+\frac{b_k}{2}\bk-\omega_k\gamma_k\bl)}\\
&~~~+\Vert\x_k\Vert_{(\frac{\eta_k(\omega_k+2)}{4})\bk+(\frac{\eta_k}{2}+2\eta_k^2)L_f^2\bk}^2\\
&~~~-(1+b_k)\frac{\eta_k\gamma_k}{\omega_k}\hx_k^\top\bk(\g_{k+1}^b-\g_k^b)+\frac{\eta_k}{8}\E_{\xi_k}\Vert\bg_k^s\Vert^2\\
 &~~~-\Vert\vv_k+\frac{1}{\omega_k}\g_k^b\Vert_{\eta_k(\omega_k-3\underline{\lambda}_L^{-1})\bp-\eta_k^2(\underline{\lambda}_L^{-1}-\frac{\omega_k^2}{2}\bar{\lambda}_L)\bp-2b_k\eta_k\gamma_k\bp}^2\\
 &~~~+\frac{b_k}{2}(\|\x_{k+1}\|^2_\bk+\|\g_{k+1}^b\|^2)\\
 &~~~+(\frac{1+\eta_k}{2\eta_k\omega_k^2\underline{\lambda}_L^2}+\frac{b_k\gamma_k^2}{2\omega_k^2}+\frac{1}{4})\eta_k^2L_f^2\E_{\xi_k}[\Vert\bg_k^s\Vert^2]\\
 &~~~+(\frac{\eta_k}{2}+2\eta_k^2)n\sigma^2,\addtag\label{eq:lyapunovofvv3}\\
 &V_{4,k+1}\leq V_{4,k}-\frac{\eta_k}{4}(1-2\eta_k^2L_f)\E_{\xi_k}\Vert\bg_k^s\Vert^2+\eta_kL_f^2\|\x_k\|_\bk^2\\
 &~~~-\frac{\eta_k}{4}\|\bg_k^b\|^2+n\sigma^2\eta_k,\addtag\label{eq:lyapunovofvv4}
\end{align*}
Since $\gamma_k=\beta_1\omega$, $\beta_1>1$, $\alpha_x\in(0,\frac{1}{r})$, and $\eta_k=\beta_2/\omega_k$, we have
    \begin{align*}
        \E_{\xi_k}[V_{k+1}]&\leq V_k-\|\x_k\|_{(\eta\check{\epsilon}_1-\eta^2\check{\epsilon}_2)\bk}-\Vert\vv_k+\frac{1}{\omega}\g_k^b\Vert^2_{\eta(\epsilon_4-\eta\epsilon_5)\bp}\\
 &~~~-\eta(\check{\epsilon}_6-\eta\check{\epsilon}_7)\E_{\xi_{k}}\|\bg_k^s\|-\frac{\eta}{4}\|\bg_k^b\|^2\\
 &~~~+(3+13\eta+\frac{16\eta}{\varphi_2})n\sigma^2\eta\\
 &~~~-(\epsilon_{10}-\eta\epsilon_{11}-\eta^2\epsilon_{12})\Vert\x_k-\x_{k}^c\Vert^2.\addtag\label{eq:upperofVVV}
    \end{align*}
From~\eqref{upperofV2},~\eqref{upperofV21}, and~\eqref{eq:lyapunovofvv1}--\eqref{eq:upperofVVV} we have~\eqref{eq:linearconverge1}.

\bibliographystyle{IEEEtran}
\bibliography{ref_Antai}

\end{document}